\newtheorem{theorem}{Theorem}[section]
\newtheorem{lemma}[theorem]{Lemma}
\newtheorem{corollary}[theorem]{Corollary}
\newtheorem{proposition}[theorem]{Proposition}
\theoremstyle{definition}
\newtheorem{definition}[theorem]{Definition}
\newenvironment{example}[1][Example]{\begin{trivlist}
\item[\hskip \labelsep {\bfseries #1}]}{\end{trivlist}}
\newenvironment{remark}[1][Remark]{\begin{trivlist}
\item[\hskip \labelsep {\bfseries #1}]}{\end{trivlist}}
\numberwithin{equation}{section}
\newcommand{\abs}[1]{\lvert#1\rvert}
\newcommand{\LambertL}{\mathscr{L}}
\begin{document}

\title{Lambert series and q-functions near q=1}

\author{Shubho Banerjee}
\address{Department of Physics, Rhodes College, 2000 N. Parkway, Memphis, TN 38112}
\email{banerjees@rhodes.edu}
\email{wilbe-16@rhodes.edu}
\thanks{The first author was supported by the Van Vleet Physics Professorship.}

\author{Blake Wilkerson}
\thanks{The second author was supported by the Mac Armour Physics Fellowship.}

\subjclass[2010]{Primary 33D05; Secondary 41A60}
\date{January 2016}
\keywords{Lambert, Eisenstein, series, q-gamma, q-Pochhammer, Jacobi theta}

\begin{abstract}
We study the Lambert series $\mathscr{L}_q(s,x) = \sum_{k=1}^\infty k^s q^{k x}/(1-q^k)$, for all $s \in \mathbb{C}$. We obtain the complete asymptotic expansion of $\mathscr{L}_q(s,x)$ near $q=1$. Our analysis of the Lambert series yields the asymptotic forms for several related q-functions: the q-gamma and q-polygamma functions, the q-Pochhammer symbol, and, in closed form, the Jacobi theta functions. Some typical results include $\Gamma_2(\frac{1}{4}) \Gamma_2(\frac{3}{4}) \simeq \frac{2^{13/32} \pi}{\log 2}$ and $\vartheta_4 (0,e^{-1/\pi}) \simeq 2 \pi e^{-\pi^3\!/4}$, with relative errors of order $10^{-25}$ and $10^{-27}$ respectively.
\end{abstract}

\maketitle

\section{Introduction}\label{Introduction}
We analyze the $q \rightarrow 1^-$ asymptotic behavior of a Lambert series and its associated functions: the q-Pochhammer symbol, the q-gamma and q-polygamma functions, and the Jacobi theta functions. This section provides the definitions for all the functions analyzed in this paper and outlines the relationships that connect these functions to each other.

\subsection*{Lambert series}
We study Lambert series of the type
\begin{equation} \label{Lambert}
\hspace{1.02 in} \LambertL_q(s,x) = \sum_{k=1}^\infty \frac{k^s q^{k x}}{1-q^k},  \hspace{0.5 in} s \in \mathbb{C},
\end{equation}
with $0 \le q < 1$ and $x>0$. This Lambert series is closely related to the polylogarithm function
\begin{equation} \label{polylog}
\hspace{1.37 in} \text{Li}_s(z) = \sum_{k=1}^\infty \frac{z^k}{k^s} \qquad\qquad s \in \mathbb{C}, \, \abs{z}<1
\end{equation}
through the equation
\begin{equation*}
\sum_{n=0}^\infty \text{Li}_s(q^{n+x}) = \LambertL_q(-s,x).
\end{equation*}
At $x=1$, the Lambert series $\LambertL_q(s,x)$ is the generating function for the divisor function $\sigma_s(n)$, the sum of the $s^\text{th}$ powers of divisors of an integer $n$ \cite{Abramowitz}:
\begin{equation}\label{divisor}
\sum_{n=1}^\infty \sigma_s(n) \, q^n = \LambertL_q(s,1).
\end{equation}

\subsection*{q-Pochhammer symbol}
The q-Pochhammer symbol is defined as the infinite product
\begin{equation}\label{q-Pochhammer}
(a,q)_\infty = \prod_{n=0}^\infty (1-a q^n).
\end{equation}
By setting $a=q^x$, the q-Pochhammer symbol can be related to the Lambert series at $s=-1$ as (see Lemma \ref{LemmaPochhammer})
\begin{equation}\label{log q-Pochhammer}
\log\,(q^x,q)_\infty = -\LambertL_q(-1,x).
\end{equation}

\subsection*{q-gamma and q-polygamma functions}
Jackson\cite{Jackson} introduced the following q-analog of the gamma function, $\Gamma(x)$:
\begin{gather} \label{qgamma}
\Gamma_q(x) = (1-q)^{1-x} \prod_{n=0}^\infty \frac{1-q^{n+1}}{1-q^{n+x}} \\
\hspace{0.12 in} = (1-q)^{1-x} \frac{(q,q)_\infty}{(q^x,q)_\infty} \nonumber.
\end{gather}
The q-gamma function, stated in terms of the ratio of two q-Pochhammer symbols, is thus related to the Lambert series.
The q-digamma function, $\psi_q(x)= d\log\Gamma_q(x)/dx $, can be expressed in terms of the Lambert series (\ref{Lambert}) at $s=0$ as
\begin{gather}
\psi_q(x) = -\log(1-q) + \log q \sum_{n=0}^\infty \frac{q^{n+x}}{1-q^{n+x}} \label{qdigamma} \\
\hspace{0.22 in} = -\log(1-q) + \log q \sum_{k=1}^\infty \frac{q^{k x}}{1-q^k} \nonumber \\
\hspace{0.12 in} = -\log(1-q) + \log q \, \LambertL_q(0,x) \nonumber.
\end{gather}
The higher order q-polygamma functions, $\psi^{(m)}_q(x) = d^m\psi_q(x)/dx^m $, for integers $m>0$, can be expressed in terms of the Lambert series at $s=m$ as
\begin{equation}\label{qpolygamma}
\psi^{(m)}_q(x) = (\log q)^{m+1} \LambertL_q(m,x).
\end{equation}

\subsection*{Jacobi theta functions}
The Jacobi theta functions, defined for $z \in \mathbb{C}$ as
\begin{equation}\label{theta 1}
\vartheta_1\!\left(z+\frac{\pi}{2},q\right) = \vartheta_2(z,q) = \sum_{n=-\infty}^\infty q^{(n+1/2)^2} e^{i (2 n+1) z}
\end{equation}
and
\begin{equation}\label{theta 3}
\vartheta_3\left(z,q\right) = \vartheta_4\!\left(z+\frac{\pi}{2},q\right) = \sum_{n=-\infty}^\infty q^{n^2} e^{i 2 n z},
\end{equation}
play an important role in the theory of elliptic functions. The theta functions can be related to the Lambert series through the q-Pochhammer symbol. Using the Jacobi triple product identity \cite{Whittaker},
\begin{equation}\label{triple product}
\vartheta_4(z,q) = \left(q^2,q^2\right)_\infty \, \left(q \, e^{i 2 z},q^2\right)_\infty \, \left(q \, e^{-i 2 z},q^2\right)_\infty \, .
\end{equation}
The other three theta functions defined above can be written in terms of $\vartheta_4(z,q)$ using (\ref{theta 1}), (\ref{theta 3}) and
\begin{equation}\label{theta1-theta4}
\vartheta_1(z,q) = -i \, q^{1/4} \, e^{i z} \, \vartheta_4\!\left(\! z+\frac{\log q}{2 i},q\right).
\end{equation}

The Lambert series (\ref{Lambert}) has a singularity at $q=1$ that makes the analysis of these series difficult in this limit. As $q \rightarrow 1^-$, the number of terms of $\LambertL_q(s,x)$ required to achieve a given accuracy increases dramatically. The associated functions described above have similar convergence problems in this limit.

In Section \ref{Lambert result} below, we obtain an asymptotic expansion for the Lambert series near $q=1$. This expansion converges much faster near $q=1$ than the series in (\ref{Lambert}). Only two terms in this expansion give accurate results within one part in ten million for $0.1<q<1$. In the subsequent sections we obtain similar asymptotic expansions for the associated functions discussed above. Some well-known asymptotic results (such as that for the Lambert series by Knopp \cite{Knopp}, and for the Euler function by Watson \cite{Watson}) follow as special cases of the theorems proved in this paper.

\section{Lambert series near $q=1$}
\label{Lambert result}
We begin by presenting some preliminary definitions and properties that will be utilized throughout the paper. Recall that the Bernoulli polynomials $B_n(x)$ are defined through the exponential generating function
\begin{equation} \label{B(x)}
\frac{t \, e^{x t}}{e^t-1}=\sum_{n=0}^\infty \frac{B_n(x)}{n!} \, t^n.
\end{equation}
Setting $x=0$ in (\ref{B(x)}) gives the generating function for Bernoulli numbers $B_n = B_n(0)$. Directly related to the Bernoulli generating functions is the differential operator $\frac{D}{e^D-1}$, where $D=\frac{d}{dx}$ represents differentiation with respect to $x$. By formally expanding the operator as a power series in $D$,
\begin{equation} \label{D}
\frac{D}{e^D-1}=\sum_{n=0}^\infty \frac{B_n}{n!} \, D^n,
\end{equation}
it can be applied to a function of $x$.
For the proofs below, we require the following elementary applications of this operator \cite{Hasse}:
\begin{gather}
\hspace{0.65 in} \frac{D}{e^D-1} \, x^m = B_m(x), \qquad m \in \mathbb{N} \label{x^m} \\
\hspace{0.63 in} \frac{D}{e^D-1} \, x^{-s} = s \, \zeta(1+s,x), \qquad s \in \mathbb{C} \label{x^s}
\end{gather}
where $\zeta(s,x)$ is the Hurwitz zeta function, a generalization of the Riemann zeta function $\zeta(s) = \zeta(s,1)$.

Having introduced the definitions and properties above, we now present the main results of this section, beginning with the following lemma.

\begin{lemma} \label{LemmaLambert}
The Lambert series \emph{(\ref{Lambert})} can be expressed as the application of the differential operator \emph{(\ref{D})} on the polylogarithm function in the following manner\emph{:}
\begin{equation*}
\LambertL_q(s,x) = \frac{D}{e^D-1} \frac{\emph{Li}_{1-s}(q^x)}{\log(1/q)}.
\end{equation*}
\end{lemma}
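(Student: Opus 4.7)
The plan is to expand the polylogarithm as a power series in $q^x$, observe that each term $q^{kx}$ is an eigenfunction of $D = d/dx$, and then apply the operator $\frac{D}{e^D-1}$ termwise using its formal power series expansion (\ref{D}).

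First I would write
\begin{equation*}
\frac{\text{Li}_{1-s}(q^x)}{\log(1/q)} = \sum_{k=1}^{\infty} \frac{k^{s-1}\, q^{kx}}{\log(1/q)}.
\end{equation*}
Since $q^{kx} = e^{k x \log q}$ is an eigenfunction of $D$ with eigenvalue $k\log q$, applying the power series in $D$ defining $\frac{D}{e^D-1}$ to $q^{kx}$ gives the symbolic identity
\begin{equation*}
\frac{D}{e^D - 1}\, q^{kx} \;=\; \frac{k\log q}{e^{k\log q}-1}\, q^{kx} \;=\; \frac{k\log(1/q)}{1-q^k}\, q^{kx},
\end{equation*}
where I used $e^{k\log q} = q^k$ and a sign flip to turn $\log q$ into $-\log(1/q)$. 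The factor $\log(1/q)$ then cancels the denominator in the polylogarithm coefficients, and summing over $k$ yields exactly $\sum_{k\ge 1} k^s q^{kx}/(1-q^k) = \LambertL_q(s,x)$, as desired.

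The subtlety, and what I would treat as the main obstacle, is the justification of the two formal manipulations: (i) that the eigenvalue identity $\frac{D}{e^D-1}e^{\alpha x} = \frac{\alpha}{e^\alpha-1}e^{\alpha x}$ holds in the sense in which the operator is used in the paper, and (ii) that $\frac{D}{e^D-1}$ can be interchanged with the infinite sum over $k$. For (i), the Taylor series of $z/(e^z-1)$ converges only for $|z|<2\pi$, so the naive power-series action only handles the modes with $k|\log q| < 2\pi$; the right interpretation is the one already implicit in the paper's formulas (\ref{x^m})--(\ref{x^s}), namely that $\frac{D}{e^D-1}$ is defined on appropriate eigenfunctions through the closed form $\frac{\alpha}{e^\alpha-1}$, so I would simply invoke this convention. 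For (ii), the cleanest route is to note that after the operator acts, the resulting series $\sum k^s q^{kx}/(1-q^k)$ converges absolutely for $0 \le q < 1$ and $x>0$, and one can check termwise agreement by applying $\frac{D}{e^D-1}$ to the partial sums and passing to the limit; alternatively, one can equate coefficients of $k^{s-1}$ and invoke linearity.

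If a more rigorous footing is required, I would present the identity as a computation on each eigenmode $q^{kx}$ and then \emph{define} the operator's action on $\text{Li}_{1-s}(q^x)/\log(1/q)$ by the resulting sum, which is consistent with (\ref{x^m}) and (\ref{x^s}). In either presentation the algebraic content of the lemma is the eigenvalue calculation above, so the proof is essentially a one-line verification once the conventions are pinned down.
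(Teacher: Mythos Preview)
Your proposal is correct and is essentially the paper's own argument: expand $\text{Li}_{1-s}(q^x)$ as $\sum_k k^{s-1}q^{kx}$, apply $\frac{D}{e^D-1}$ termwise, and recognize the resulting $k$-sum as $\log(1/q)\,\LambertL_q(s,x)$. The only cosmetic difference is that the paper writes out the Bernoulli expansion $\sum_n \tfrac{B_n}{n!}(k\log q)^n q^{kx}$ and resums it explicitly, whereas you invoke the eigenvalue identity $\frac{D}{e^D-1}e^{\alpha x}=\frac{\alpha}{e^\alpha-1}e^{\alpha x}$ directly; the paper also treats the whole computation as formal and does not address the convergence caveats you raise.
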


\begin{proof}
\begin{equation*}
\begin{aligned}
\frac{D}{e^D-1} \, \text{Li}_{1-s}(q^x) &= \frac{D}{e^D-1} \sum_{k=1}^\infty \frac{q^{k x}}{k^{1-s}} \\
&= \sum_{k=1}^\infty k^{s-1} \frac{D}{e^D-1} \, q^{k x} \\
&= \sum_{k=1}^\infty k^{s-1} \sum_{n=0}^\infty \frac{B_n}{n!} \, D^n \, q^{k x} \\
&= \sum_{k=1}^\infty k^{s-1} \sum_{n=0}^\infty \frac{B_n}{n!} \, (k \log q)^n \, q^{k x} \\
&= \sum_{k=1}^\infty k^{s-1} \frac{k \log q}{q^k-1} \, q^{k x} \\
&= \log(1/q) \sum_{k=1}^\infty \frac{k^s q^{k x}}{1-q^k} \\
&= \log(1/q) \, \LambertL_q(s,x).
\end{aligned}
\end{equation*}
Dividing both sides by $\log(1/q)$ proves the lemma.
\end{proof}

Lemma \ref{LemmaLambert} establishes an important relationship between the Lambert series (\ref{Lambert}) and the polylogarithm functions. We now use this relationship to prove one of the main results of this paper as stated in the theorem below.

\begin{theorem}\label{TheoremLambert}
The Lambert series $\LambertL_q(s,x)$ has the following expansion at $q=1$.
\begin{enumerate}
\item For $s \neq 0, -1, -2, \ldots$ \emph{:}

\begin{equation*}
\begin{aligned}
\LambertL_q(s,x) = \frac{\Gamma(1+s) \, \zeta(1+s,x)}{\left(\!\log\frac{1}{q}\right)^{\hspace{-0.035 in}1+s}} - \sum_{k=0}^\infty \frac{\zeta(1-s-k)}{k!} B_k(x) (\log q)^{k-1}.
\end{aligned}
\end{equation*}

\item For $s = 0$\emph{:}
\begin{equation*}
\begin{aligned}
\hspace{-0.43 in} \LambertL_q(0,x) = \frac{\psi(x)+ \log\log\frac{1}{q}}{\log q} - \sum_{k=1}^\infty \frac{\zeta(1-k)}{k!} B_k(x) (\log q)^{k-1}.
\end{aligned}
\end{equation*}

\item For $s = -m = -1, -2, -3, \ldots$ \emph{:}
\begin{equation*}
\begin{aligned}
\hspace{0.32 in} \LambertL_q(-m,x) = \bigg[m \, \zeta^{\,\prime}(1-m,x) &+ \bigg(\!\log\log\frac{1}{q} - H_{m-1}\!\bigg) B_m(x)\bigg] \frac{(\log q)^{m-1}}{m!} \\
&- \sum_{\substack{k=0, \\ k \neq m}}^\infty \frac{\zeta(1+m-k)}{k!} B_k(x) (\log q)^{k-1},
\end{aligned}
\end{equation*}

\end{enumerate}
\hspace{0.25 in} where $H_{m-1}$ is the $(m-1)^\text{th}$ harmonic number.
\end{theorem}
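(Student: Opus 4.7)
The plan is to combine Lemma \ref{LemmaLambert} with the classical Jonquière (Wood--Lindelöf) expansion
\[
\text{Li}_{\sigma}(e^{-\mu}) = \Gamma(1-\sigma)\,\mu^{\sigma-1} + \sum_{n=0}^\infty \frac{\zeta(\sigma-n)}{n!}(-\mu)^n,
\]
valid for $|\mu|<2\pi$ and $\sigma\notin\{1,2,3,\ldots\}$, under the substitution $\sigma = 1-s$ and $\mu = x\log(1/q)$. After dividing by $\log(1/q)$, the resulting expression is fed into Lemma \ref{LemmaLambert} and the operator $\frac{D}{e^D-1}$ is applied term-by-term, using (\ref{x^m}) on the integer-power pieces and (\ref{x^s}) on the $x^{-s}$ piece. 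The three cases of the theorem correspond precisely to (1) Jonquière's formula applying as stated, (2) the leading $\Gamma(1-\sigma)\mu^{\sigma-1}$ blowing up at $\sigma=1$, and (3) an interior summand $\zeta(\sigma-n_0)$ blowing up at $n_0 = \sigma - 1 = m$ for $\sigma = 1+m$.

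For case (1), routine bookkeeping with the sign identities $\log(1/q) = -\log q$ and $(-1)^n(\log(1/q))^{n-1} = -(\log q)^{n-1}$ rewrites the divided expansion as
\[
\Gamma(s)\,x^{-s}\bigl(\log(1/q)\bigr)^{-s-1} - \sum_{n=0}^\infty \frac{\zeta(1-s-n)}{n!}\,x^n(\log q)^{n-1},
\]
and then (\ref{x^s}) and (\ref{x^m}) turn the two pieces into $\Gamma(1+s)\zeta(1+s,x)/(\log(1/q))^{1+s}$ and $-\sum_k \zeta(1-s-k)B_k(x)(\log q)^{k-1}/k!$ respectively, matching case (1) on the nose.

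For cases (2) and (3), the singular pair in Jonquière's formula must be merged by a limit: when $\sigma - 1 = n_0 \in \mathbb{Z}_{\geq 0}$, the poles of $\Gamma(1-\sigma)\mu^{\sigma-1}$ and of $\zeta(\sigma-n_0)(-\mu)^{n_0}/n_0!$ cancel, leaving $\frac{(-\mu)^{n_0}}{n_0!}(H_{n_0}-\log\mu)$ in their place (with $H_0 = 0$). For $s=0$ this replaces the two divergent contributions by $-\log(x\log(1/q))$ and restricts the surviving sum to $n\geq 1$; for $s=-m$ with $m\geq 1$ it produces $\frac{(-\mu)^m}{m!}(H_m - \log\mu)$ together with the sum over $n \neq m$. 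Applying $\frac{D}{e^D-1}$ now requires its action on $\log x$ and on $x^m\log x$, which I will obtain by formally differentiating (\ref{x^s}) in $s$: the expansion $s\zeta(1+s,x) = 1 - s\psi(x) + O(s^2)$ gives $\frac{D}{e^D-1}\log x = \psi(x)$, and at $s=-m$ one similarly finds $\frac{D}{e^D-1}(x^m\log x) = m\zeta'(1-m,x) + B_m(x)/m$ after invoking $\zeta(1-m,x) = -B_m(x)/m$. Collecting terms and using $H_m - 1/m = H_{m-1}$ then produces the stated expressions; the $\log\log(1/q)$ terms originate from $-\log\mu = -\log x - \log\log(1/q)$, and the $\zeta'(1-m,x)$ term originates from the $x^m\log x$ contribution.

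The main obstacle is the singular-merger step itself: the simultaneous cancellation of the pole in $\Gamma(1-\sigma)$ and the pole in $\zeta(\sigma-n_0)$ must be performed as a controlled $\sigma\to 1+n_0$ limit, and it is precisely this limit, together with the $s$-derivative of the operator identity (\ref{x^s}), that generates the $\log\log(1/q)$ and $\zeta'(1-m,x)$ features distinguishing cases (2) and (3) from case (1). Everything else is formal manipulation and standard Bernoulli/Hurwitz identities.
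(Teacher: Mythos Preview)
Your proposal is correct and follows essentially the same route as the paper: both apply Lemma~\ref{LemmaLambert} to the standard polylogarithm expansion about $z=1$ (what you call the Jonqui\`ere expansion, what the paper cites from \cite{EMOT}), then evaluate $\frac{D}{e^D-1}$ termwise via (\ref{x^m}) and (\ref{x^s}), and handle the integer cases by the same $s$-derivative trick yielding $\frac{D}{e^D-1}(x^m\log x)=m\zeta'(1-m,x)+B_m(x)/m$ and the $H_m-1/m=H_{m-1}$ recombination. The only cosmetic difference is that you derive the integer-order polylogarithm formula by explicitly merging the two simple poles as $\sigma\to 1+n_0$, whereas the paper simply quotes the already-merged form from \cite{EMOT}.
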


\begin{proof}
We use the series expansion of the polylogarithm $\text{Li}_{1-s}(z)$ about $z=1$, given in \cite{EMOT}. To prove (1), we use the expansion of $\text{Li}_{1-s}(z)$ valid for $s \neq 0, -1, -2, \ldots$, and set $z = q^x$. Then applying the operator (\ref{D}) to both sides gives:
\begin{equation*}
\begin{aligned}
\frac{D}{e^D-1} \text{Li}_{1-s}(q^x) &= \frac{D}{e^D-1} \left[\Gamma(s) \! \left(\! \log \frac{1}{q^{x}} \!\right)^{\hspace{-0.035 in}-s} + \sum_{k=0}^\infty \frac{\zeta(1-s-k)}{k!} (\log q^x)^k\right] \\
&= \Gamma(s) \, \frac{D}{e^D-1} x^{-s} \! \left(\! \log\frac{1}{q}\right)^{\hspace{-0.035 in}-s} + \sum_{k=0}^\infty \frac{\zeta(1-s-k)}{k!} \, \frac{D}{e^D-1} x^k \, (\log q)^k \\
&= \Gamma(1+s) \, \zeta(1+s,x) \! \left(\!\log\frac{1}{q}\right)^{\hspace{-0.035 in}-s} + \sum_{k=0}^\infty \frac{\zeta(1-s-k)}{k!} B_k(x) (\log q)^k.
\end{aligned}
\end{equation*}

To establish the result above we invoked (\ref{x^m}), (\ref{x^s}), and the recursion relation $\Gamma(1+s) = s \, \Gamma(s)$. Dividing both sides by $\log(1/q)$ and applying Lemma \ref{LemmaLambert} completes the proof of (1).

To prove (2) and (3), we use the series expansion of $\text{Li}_{1+m}(z)$ that holds for $m = 0, 1, 2, \ldots$, and set $z = q^x$. Applying the operator (\ref{D}) to both sides gives:
\begin{equation*}
\begin{aligned}
\frac{D}{e^D-1} \text{Li}_{1+m}&(q^x) \\
&\text{\hspace{-0.63 in}}= \frac{D}{e^D-1} \Bigg[\!\!\left(\!H_m - \log \log \frac{1}{q^{x}}\!\right) \frac{(\log q^x)^m}{m!} + \sum_{\substack{k=0, \\ k \neq m}}^\infty \frac{\zeta(1+m-k)}{k!} (\log q^x)^k \Bigg] \\
&\text{\hspace{-0.63 in}}= -\left[\frac{D}{e^D-1} (x^m \log x) + \left(\!\log\log\frac{1}{q} - H_m\!\right) \frac{D}{e^D-1} x^m \right] \frac{(\log q)^m}{m!} \\
&\text{\hspace{1.33 in}}+ \sum_{\substack{k=0, \\ k \neq m}}^\infty \frac{\zeta(1+m-k)}{k!} \, \frac{D}{e^D-1} x^k \, (\log q)^k \\
&\text{\hspace{-0.63 in}}= -\left[m \, \zeta^{\,\prime}(1-m,x) + \left(\!\log\log\frac{1}{q} - H_{m-1}\!\right) B_m(x)\right] \frac{(\log q)^m}{m!} \\
&\text{\hspace{1.33 in}}+ \sum_{\substack{k=0, \\ k \neq m}}^\infty \frac{\zeta(1+m-k)}{k!} B_k(x) (\log q)^k .
\end{aligned}
\end{equation*}

The last step above assumes $m \ne 0$. Dividing both sides by $\log(1/q)$ and applying Lemma \ref{LemmaLambert} completes the proof of (3).

The derivative of the zeta function $\zeta^{\,\prime}(1-m,x)$ in the proof of (3) is obtained in the following manner:
\begin{equation*}
\begin{aligned}
\frac{D}{e^D-1} (x^m \log x) &= \frac{D}{e^D-1} \left(\frac{d}{ds} \, x^s \!\right)_{\!\!s=m} \\
&= \frac{d}{ds} [-s \, \zeta(1-s,x)]_{s=m} \\
&= m \, \zeta^{\,\prime}(1-m,x) - \zeta(1-m,x) \\
&= m \, \zeta^{\,\prime}(1-m,x) + \frac{1}{m} B_m(x).
\end{aligned}
\end{equation*}
The coefficients of $B_m(x)$ were combined to give $H_m - (1/m) = H_{m-1}$ in the final result. For the proof of (2), i.e. the $m=0$ case, we get $\frac{D}{e^D-1} \log(x) = \psi(x)$. Otherwise, the proof follows the same steps as that of (3).
\end{proof}

\begin{remark}
The most natural interval for $x$ in the asymptotic expansions of Theorem \ref{TheoremLambert} is $0 < x \le 1$. For $x$ outside of this interval, we may use the Lambert series recursion relation
\begin{equation}
\LambertL_q(s,x) = \LambertL_q(s,x + 1 - \lceil x \rceil) - \! \sum_{n=1}^{\lceil x \rceil - 1} \text{Li}_{-s}\big(q^{n + x - \lceil x \rceil}\big)
\end{equation}
to apply Theorem \ref{TheoremLambert} to $\LambertL_q(s,x + 1 - \lceil x \rceil)$ instead of $\LambertL_q(s,x)$. Here $\lceil x \rceil$ denotes the ceiling of $x$, so that $0 < x + 1 - \lceil x \rceil \le 1$. For using the Theorems in Sections \ref{SectionPochhammer}, \ref{SectionGamma}, \ref{SectionDigamma}, and \ref{SectionPolygamma}, the corresponding $x$ to $x + 1 - \lceil x \rceil$ recursion should be applied when $x>1$.
\end{remark}

Note that at $x=1$ Theorem \ref{TheoremLambert} is in agreement with Knopp's asymptotic result \cite{Knopp} for $Re\,s>0$:
\begin{equation*}
\LambertL_q(s,1) \sim \frac{\Gamma(1+s)\zeta(1+s)}{(1-q)^{1+s}}, ~~~~~~\text{as}~~q\rightarrow 1.
\end{equation*}
In addition, application of Theorem \ref{TheoremLambert} at $x=1$ yields the complete asymptotic expansion of the generating function for the divisor function (\ref{divisor}) for any $s$. Of particular interest are the asymptotic expansions when $s$ is an odd integer for which the asymptotic form converges to a finite series as stated in the corollary below.

\begin{corollary}\label{CorollaryLambert}
At odd integer values of $s$, the generating function for the divisor function $\sigma_s(n)$ \emph{(\ref{divisor})} has the following asymptotic expansion at $q=1$.
\begin{enumerate}
\item For $s = m = 1, 3, 5, \ldots$ \emph{:}
\begin{equation*}
\sum_{n=1}^\infty \sigma_m(n) \, q^n \simeq \frac{m! \, \zeta(1+m)}{(\log q)^{1+m}}-\frac{\zeta(1-m)}{\log q}-\frac{1}{2}\zeta(-m).
\end{equation*}

\item For $s = -m = -1, -3, -5,\ldots$ \emph{:}
\begin{equation*}
\begin{aligned}
\hspace{0.4 in} \sum_{n=1}^\infty \sigma_{-m}(n)\, q^n \simeq \bigg[m \, \zeta^{\,\prime}(1-m) &+ \left(\!\log\log\frac{1}{q} - H_{m-1}\!\right) B_m(1)\bigg] \frac{(\log q)^{m-1}}{m!} \\
&- \sum_{\substack{k=0, \\ k \neq m}}^{m+1}  \frac{\zeta(1+m-k)}{k!} B_k(1)\,(\log q)^{k-1}.
\end{aligned}
\end{equation*}
\end{enumerate}
The symbol $\simeq$ indicates asymptotic equality in the limit $q\rightarrow1^{-}$.
\end{corollary}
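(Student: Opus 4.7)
The strategy is to specialize Theorem \ref{TheoremLambert} to $x = 1$ and $s = \pm m$ with $m$ odd, then exploit two elementary vanishings to collapse each infinite tail to a finite sum: the Bernoulli numbers satisfy $B_k(1) = B_k = 0$ for every odd $k \geq 3$, and the Riemann zeta function satisfies $\zeta(n) = 0$ at every negative even integer $n$.

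For part (1), I would set $s = m$ and $x = 1$ in Theorem \ref{TheoremLambert}(1). Using $\zeta(1+m, 1) = \zeta(1+m)$, $\Gamma(1+m) = m!$, and the fact that $m + 1$ is even so $(\log(1/q))^{1+m} = (\log q)^{1+m}$, the leading term becomes $m!\,\zeta(1+m)/(\log q)^{m+1}$. In the $k$-summation, each summand carries the product $\zeta(1-m-k)\,B_k(1)$: for odd $k \geq 3$ this vanishes by $B_k(1) = 0$, while for even $k \geq 2$ the argument $1-m-k$ is a negative even integer (since $m$ is odd), so $\zeta(1-m-k) = 0$. Only $k = 0$ and $k = 1$ survive, contributing $\zeta(1-m)/\log q$ and $\tfrac{1}{2}\zeta(-m)$ via $B_0(1) = 1$ and $B_1(1) = 1/2$; the overall minus sign in front of the sum then produces the claimed expression.

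For part (2), I would apply Theorem \ref{TheoremLambert}(3) at $x = 1$, using $\zeta^{\,\prime}(1-m, 1) = \zeta^{\,\prime}(1-m)$. For odd $m \geq 3$ one has $B_m(1) = 0$, so the $\log\log(1/q)$ term formally drops out; for $m = 1$ it persists with coefficient $B_1(1) = 1/2$, so the formula can be stated uniformly. The truncation at $k = m+1$ follows from the same dichotomy applied to the sum: for $k \geq m+2$, either $k$ is odd (and $B_k(1) = 0$) or $k$ is even with $k \geq m+3$ (so that $1 + m - k \leq -2$ is a negative even integer, killing $\zeta$). Every possibly nonvanishing term thus lies in $0 \leq k \leq m+1$ with $k \neq m$, which is the finite sum displayed in the corollary. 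The main obstacle is simply the parity bookkeeping — especially keeping $B_1(1) = 1/2$ distinct from $B_1 = -1/2$ in the $m = 1$ case of part (2), and verifying the sign of $(\log(1/q))^{1+m}$ in part (1).
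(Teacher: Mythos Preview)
Your proposal is correct and follows essentially the same approach as the paper: specialize Theorem~\ref{TheoremLambert} to $x=1$ via \eqref{divisor}, then observe that for $k\ge 2$ in part~(1) and $k\ge m+2$ in part~(2) the product $\zeta(\cdot)\,B_k(1)$ vanishes because either $B_k(1)=0$ (odd $k\ge 3$) or the zeta argument is a negative even integer. The paper's proof states this in a single sentence without the parity bookkeeping you spell out, but the argument is the same.
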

\begin{proof}
The proof is a straightforward application of Theorem \ref{TheoremLambert} while using (\ref{divisor}). The terms with higher $k$ values [$k\ge 2$ in (1) and $k \ge m+2$ in (2)] do not contribute to the sum because the product of the Riemann zeta function with $B_k(1)$ for these values of $k$ is always zero for odd $m$.
\end{proof}
\begin{remark}
The final asymptotic forms in Corollary \ref{CorollaryLambert} are obtained through the cancellation of infinite series in Theorem \ref{TheoremLambert} that diverge for general $x$ values. At $x=1$, the otherwise diverging coefficients are multiplied by zeros to yield finite asymptotic expressions. These expressions are not valid in the limit $q\rightarrow 0^+$ but are asymptotically equal in the limit $q\rightarrow 1^-$. We replace the $=$ signs with the $\simeq$ signs to indicate this fact. Same notation of the $\simeq$ sign is used throughout this paper wherever cancellation of infinite diverging series takes place due to a special choice of $x$ values.
\end{remark}

For the positive odd values of $m$ in (1) of Corollary \ref{CorollaryLambert}, the generating function of the divisor function (\ref{divisor}) is intimately connected to the Fourier expansion of the Eisenstein series\cite{Apostol2} through the relation
\begin{equation}\label{Eisenstein}
E_{2k}(q) = 1 + \frac{2}{\zeta(1-2k)} \sum_{n=1}^\infty \sigma_{2k-1}(n) \, q^n.
\end{equation}

Note that the second term of the asymptotic expansion in (1) of Corollary \ref{CorollaryLambert} vanishes for all $m$ except $m=1$. We redefine the Eisenstein series in the following manner to account for this exception.
\begin{definition}\label{DefinitionEisenstein}
For $k \ge 1$ let the \textit{modified} Eisenstein series be
\begin{equation*}
{\tilde E}_{2k}(q) = E_{2k}(q) + \frac{2\,\zeta(2-2k)}{\zeta(1-2k)\log q}.
\end{equation*}
\end{definition}
With this definition ${\tilde E}_{2k} = {E}_{2k}$ for $k \ge 2$ but ${\tilde E}_{2}(q) = {E}_{2}(q) + 12/\log q$. We may now state the theorem below.
\begin{theorem} \label{TheoremEisenstein}
For $k\ge 1$ the modified Eisenstein series have the following asymptotic form at $q=1$\emph{:}
\begin{equation*}
{\tilde E}_{2k}(q) \simeq \left(\frac{2\pi i}{\log q}\right)^{\!{2k}}.
\end{equation*}
\end{theorem}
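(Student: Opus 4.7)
The plan is to apply Corollary \ref{CorollaryLambert} part (1) to the divisor sum appearing in the definition of $E_{2k}(q)$, observe that two of the resulting terms are cancelled respectively by the constant $1$ in the definition of $E_{2k}$ and by the $\zeta(2-2k)/\log q$ correction in $\tilde E_{2k}$, and then identify the leading coefficient using the functional equation for the Riemann zeta function.

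Concretely, I would substitute $m = 2k-1$ into part (1) of Corollary \ref{CorollaryLambert} to obtain
\begin{equation*}
\sum_{n=1}^\infty \sigma_{2k-1}(n)\, q^n \simeq \frac{(2k-1)!\,\zeta(2k)}{(\log q)^{2k}} - \frac{\zeta(2-2k)}{\log q} - \frac{1}{2}\zeta(1-2k),
\end{equation*}
then plug this into formula (\ref{Eisenstein}) for $E_{2k}(q)$. The $-\tfrac12\zeta(1-2k)$ term contributes $-1$ after multiplication by $2/\zeta(1-2k)$, exactly cancelling the additive $1$ in (\ref{Eisenstein}). Adding the $\tilde E_{2k}$ correction from Definition \ref{DefinitionEisenstein} then cancels the $-\zeta(2-2k)/\log q$ term as well, leaving
\begin{equation*}
\tilde E_{2k}(q) \simeq \frac{2\,(2k-1)!\,\zeta(2k)}{\zeta(1-2k)\,(\log q)^{2k}}.
\end{equation*}

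The remaining step is to identify this coefficient with $(2\pi i)^{2k}$. For this I would invoke the functional equation in the form
\begin{equation*}
\zeta(1-2k) = 2(2\pi)^{-2k}\cos(\pi k)\,\Gamma(2k)\,\zeta(2k) = \frac{2(-1)^k (2k-1)!\,\zeta(2k)}{(2\pi)^{2k}},
\end{equation*}
which gives $2(2k-1)!\,\zeta(2k)/\zeta(1-2k) = (-1)^k (2\pi)^{2k} = (2\pi i)^{2k}$. Dividing by $(\log q)^{2k}$ yields the claimed asymptotic form.

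The only potential obstacle is bookkeeping: keeping track of the sign $(-1)^k$ from $\cos(\pi k)$ and matching it with $i^{2k}$, and checking that the two cancellations (the constant and the $1/\log q$ term) really are exact rather than merely asymptotic. Both checks are immediate from the identities above, so no further analytic input beyond Corollary \ref{CorollaryLambert} and the standard Riemann functional equation is required.
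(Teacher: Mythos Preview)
Your proof is correct and follows essentially the same route as the paper: both apply Corollary \ref{CorollaryLambert}(1) with $m=2k-1$, absorb the constant and $1/\log q$ terms into the definitions of $E_{2k}$ and $\tilde E_{2k}$, and then simplify the surviving coefficient $2(2k-1)!\,\zeta(2k)/\zeta(1-2k)$ to $(2\pi i)^{2k}$. The only cosmetic difference is that the paper carries out the final simplification via the Bernoulli-number identities $\zeta(2k)=(-1)^{k+1}(2\pi)^{2k}B_{2k}/(2(2k)!)$ and $\zeta(1-2k)=-B_{2k}/(2k)$, whereas you invoke the Riemann functional equation directly; these are of course equivalent.
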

\begin{proof}
Rewriting the result in Corollary \ref{CorollaryLambert} in terms of ${\tilde E}_{2k}$ we get
\begin{equation*}
\begin{aligned}
{\tilde E}_{2k}(q) &\simeq \frac{(2k-1)!\,2\,\zeta(2k)}{\zeta(1-2k)(\log q)^{2k}} \\
&= \frac{\,2\,(2k)!\,\zeta(2k)}{2k \,\zeta(1-2k)\,(\log q)^{2k}} \\
&= \frac{(-1)^{k+1}(2\pi)^{2k}B_{2k}}{-B_{2k}\,(\log q)^{2k}} \\
&= \left(\frac{2\pi i}{\log q}\right)^{\!\!{2k}}.
\end{aligned}
 \end{equation*}
\end{proof}
\begin{remark}
The asymptotic expression in Theorem \ref{TheoremEisenstein} has no additional terms. The expression is extremely accurate even when $q \not\approx 1$. For example, at $ q = 0.1$, $0.3$, and $0.5$, the relative errors are less than or equal to order $10^{-5}$, $10^{-12}$, and  $10^{-15}$ respectively for all $k\ge 1$.
\end{remark}

From Theorem \ref{TheoremEisenstein} this corollary about the product of two Eisenstein series follows naturally.
\begin{corollary} \label{CorollaryEisenstein}
For any two integers $k,l\ge 1$ the modified Eisenstein series satisfy the asymptotic relation\emph{:}
\begin{equation*}
{\tilde E}_{2k}(q) \,{\tilde E}_{2l}(q) \simeq  {\tilde E}_{2k+2l}(q).
\end{equation*}
\end{corollary}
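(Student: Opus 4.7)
The plan is to apply Theorem \ref{TheoremEisenstein} directly to each factor on the left-hand side and then recognize the product as the asymptotic form of $\tilde{E}_{2k+2l}(q)$. Since Theorem \ref{TheoremEisenstein} already gives the compact formula $\tilde{E}_{2k}(q) \simeq \left(2\pi i/\log q\right)^{2k}$, the corollary is essentially a statement that this closed-form asymptotic respects multiplication in the exponent.

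Concretely, I would proceed in three short steps. First, invoke Theorem \ref{TheoremEisenstein} for both $k$ and $l$ separately, writing $\tilde{E}_{2k}(q) \simeq (2\pi i/\log q)^{2k}$ and $\tilde{E}_{2l}(q) \simeq (2\pi i/\log q)^{2l}$. Second, multiply the two asymptotic expressions: since $(2\pi i/\log q)^{2k} (2\pi i/\log q)^{2l} = (2\pi i/\log q)^{2k+2l}$, and each factor is nonzero for $q$ sufficiently close to $1^-$, the asymptotic equivalence is preserved under multiplication, yielding $\tilde{E}_{2k}(q) \tilde{E}_{2l}(q) \simeq (2\pi i/\log q)^{2k+2l}$. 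Third, apply Theorem \ref{TheoremEisenstein} once more in reverse, this time with index $k+l \ge 1$, to recognize the right-hand side as $\tilde{E}_{2k+2l}(q)$.

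There is no genuine obstacle here — the result is a direct corollary of Theorem \ref{TheoremEisenstein}. The only point meriting a brief comment is the justification that $\simeq$ behaves well under multiplication in this setting. This requires noting that the error terms absorbed into $\simeq$ (which, per the earlier remark, can involve cancellations of otherwise divergent series at special $x$ values) nevertheless vanish relative to the leading term $(2\pi i/\log q)^{2k}$ as $q \to 1^-$, so that the product of the two asymptotic forms is genuinely asymptotic to the product of the two functions. Once this is acknowledged, the three-line chain above completes the proof.
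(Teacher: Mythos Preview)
Your proposal is correct and follows exactly the paper's approach: the paper's proof simply states that the result is trivial given the asymptotic form in Theorem~\ref{TheoremEisenstein}, which is precisely your three-step application of $\tilde{E}_{2k}(q)\simeq(2\pi i/\log q)^{2k}$ to each factor and to the product. Your added remark on why $\simeq$ respects multiplication is a reasonable elaboration, but the paper does not bother with it.
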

\begin{proof}
The proof is trivial given the asymptotic form of the Eisenstein series in Theorem \ref{TheoremEisenstein}.
\end{proof}
The Eisenstein series $E_{2k}(q)$ for all $k\ge 4$ can be expressed as polynomials in $E_4(q)$ and $E_6(q)$. The asymptotic forms of the modified Eisenstein series satisfy these polynomial identities. Additionally, the modified series satisfy a much simpler asymptotic relation  $\tilde {E}_{2k}(q)\simeq[\tilde {E}_{2}(q)]^k$.

\section{q-Pochhammer symbol}\label{SectionPochhammer}
To analyze the behavior of the q-Pochhammer symbol near $q=1$ we first relate the infinite product to the Lambert series through the following lemma.

\begin{lemma} \label{LemmaPochhammer}
The q-Pochhammer symbol \emph{(\ref{q-Pochhammer})} at $a=q^x$ can be expressed in terms of the Lambert series \emph{(\ref{Lambert})} at $s=-1$ as\emph{:}
\begin{equation*}
(q^x,q)_\infty = e^{-\LambertL_q(-1,x)}.
\end{equation*}
\end{lemma}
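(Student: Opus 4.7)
The plan is to take logarithms, expand each factor as a Taylor series in $q^{n+x}$, interchange the two summations, and recognize the resulting double sum as the Lambert series at $s=-1$.

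First I would write
\begin{equation*}
\log (q^x,q)_\infty = \sum_{n=0}^\infty \log\bigl(1 - q^{n+x}\bigr),
\end{equation*}
which is legitimate because $0 \le q < 1$ and $x>0$ ensure that every factor $1-q^{n+x}$ lies in $(0,1]$, so the logarithms are real and the product converges absolutely. Next I would apply the Mercator series $\log(1-y) = -\sum_{k=1}^\infty y^k/k$, valid for $|y|<1$, to each term, obtaining the iterated sum
\begin{equation*}
\log (q^x,q)_\infty = -\sum_{n=0}^\infty \sum_{k=1}^\infty \frac{q^{k(n+x)}}{k}.
\end{equation*}

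The key step is then swapping the order of summation. All summands are positive (or one can bound $|q^{k(n+x)}|/k$ by $q^{k(n+x)}$), so by Tonelli's theorem the interchange is justified, yielding
\begin{equation*}
\log (q^x,q)_\infty = -\sum_{k=1}^\infty \frac{q^{kx}}{k} \sum_{n=0}^\infty q^{kn} = -\sum_{k=1}^\infty \frac{q^{kx}}{k(1-q^k)},
\end{equation*}
where the inner geometric series summed to $1/(1-q^k)$. Comparing with the definition (\ref{Lambert}) at $s=-1$, this last expression is precisely $-\LambertL_q(-1,x)$. Exponentiating both sides gives the claim.

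I do not expect any genuine obstacle: the only delicate point is the Fubini/Tonelli interchange, which is immediate from nonnegativity of the summands and from the fact that the resulting Lambert series converges for $0 \le q < 1$, $x>0$. The whole argument is a short manipulation of absolutely convergent series.
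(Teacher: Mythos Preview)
Your argument is correct and is essentially identical to the paper's own proof: take logarithms, expand each factor via the Mercator series, interchange the two sums, sum the inner geometric series, and recognize the result as $-\LambertL_q(-1,x)$ before exponentiating. The only difference is that you supply the Tonelli justification for the interchange, which the paper omits.
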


\begin{proof}
\begin{equation*}
\begin{aligned}
\log\,(q^x,q)_\infty &= \log \prod_{n=0}^\infty (1- q^{n+x}) = \sum_{n=0}^\infty \log (1- q^{n+x}) \\
&= -\sum_{n=0}^\infty \sum_{k=1}^\infty \frac{q^{k(n+x)}}{k} = -\sum_{k=1}^\infty \frac{q^{k x}}{k}\sum_{n=0}^\infty q^{k n} \\
&= -\sum_{k=1}^\infty \frac{1}{k} \frac{q^{k x}}{1-q^k} = -\LambertL_q(-1,x).
\end{aligned}
\end{equation*}
Exponentiating both sides completes the proof of the lemma.
\end{proof}

The main result of this section describing the asymptotic behavior of the q-Pochhammer symbol may now be stated as this theorem below.
\begin{theorem}\label{TheoremPochhammer}
The expansion of the q-Pochhammer symbol \emph{(\ref{q-Pochhammer})} at $q=1$ is\emph{:}
\begin{equation*}
(q^x,q)_\infty = \frac{\sqrt{2 \pi}}{\Gamma(x)} \left(\!\log \frac{1}{q}\right)^{\!\!\frac{1}{2}-x} \prod_{\substack{k=0, \\ k \neq 1}}^\infty e^{\frac{\zeta(2-k)}{k!} B_k(x) (\log q)^{k-1}}.
\end{equation*}
\end{theorem}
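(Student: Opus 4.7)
The plan is to combine Lemma \ref{LemmaPochhammer}, which converts the q-Pochhammer symbol into an exponential of the Lambert series at $s=-1$, with part (3) of Theorem \ref{TheoremLambert} applied at $m=1$. This should immediately produce all the required factors, modulo one classical identity for the zeta derivative.

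First I would write $(q^x,q)_\infty = e^{-\LambertL_q(-1,x)}$ using Lemma \ref{LemmaPochhammer}, then substitute the expansion from Theorem \ref{TheoremLambert}(3) with $m=1$. Since $H_0 = 0$ and $(\log q)^{m-1}/m! = 1$, the bracketed term collapses to
\begin{equation*}
\zeta'(0,x) + B_1(x)\log\log\tfrac{1}{q},
\end{equation*}
where $B_1(x) = x - \tfrac{1}{2}$. Thus
\begin{equation*}
-\LambertL_q(-1,x) = -\zeta'(0,x) - \bigl(x - \tfrac{1}{2}\bigr)\log\log\tfrac{1}{q} + \sum_{\substack{k=0,\\ k \neq 1}}^\infty \frac{\zeta(2-k)}{k!} B_k(x)(\log q)^{k-1}.
\end{equation*}

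Exponentiating and distributing, the second term in this sum becomes $\bigl(\log\tfrac{1}{q}\bigr)^{1/2 - x}$, which is exactly the power-of-log prefactor in the theorem. The remaining task is to identify $e^{-\zeta'(0,x)}$ with $\sqrt{2\pi}/\Gamma(x)$; this is Lerch's classical formula $\zeta'(0,x) = \log\Gamma(x) - \tfrac{1}{2}\log(2\pi)$, which I would cite rather than reprove. Then the product of the three factors matches the stated right-hand side term-by-term.

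The argument is essentially a bookkeeping exercise, so there is no real obstacle; the one subtle point is making sure that the $k=1$ term really is the one that has been pulled out (since $\zeta(2-k)$ has its pole precisely at $k=1$, this is both necessary and consistent with the restricted product $\prod_{k \neq 1}$ in the statement), and that the sign conventions in the exponent of $\log(1/q)$ versus $\log q$ line up correctly. Once those are verified, the proof consists of the four displayed lines above followed by Lerch's formula.
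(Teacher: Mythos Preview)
Your proposal is correct and follows essentially the same route as the paper: apply Lemma \ref{LemmaPochhammer}, substitute Theorem \ref{TheoremLambert}(3) at $m=1$ (using $H_0=0$), invoke Lerch's formula $\zeta'(0,x)=\log\Gamma(x)-\tfrac{1}{2}\log(2\pi)$, and exponentiate. The paper's proof is exactly this computation, displayed in two lines.
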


\begin{proof}
\begin{equation*}
\begin{aligned}
\LambertL_q(-1,x) &= \zeta^{\,\prime}(0,x) + B_1(x) \left(\!\log\log\frac{1}{q} - H_0\!\right) - \sum_{\substack{k=0, \\ k \neq 1}}^\infty \frac{\zeta(2-k)}{k!} B_k(x) (\log q)^{k-1} \\
&= \log \frac{\Gamma(x)}{\sqrt{2 \pi}} + \left(\!x-\frac{1}{2}\right) \log\log\frac{1}{q} - \sum_{\substack{k=0, \\ k \neq 1}}^\infty \frac{\zeta(2-k)}{k!} B_k(x) (\log q)^{k-1}.
\end{aligned}
\end{equation*}
Substituting the final expression for $\LambertL_q(-1,x)$ into Lemma \ref{LemmaPochhammer} completes the proof.
\end{proof}

\begin{remark}
The asymptotic form of the Euler function $(q,q)_\infty$ follows as a special case of this theorem at $x=1$:
\begin{equation} \label{Euler}
(q,q)_\infty \simeq \sqrt{\frac{2 \pi}{\log(1/q)}} \, e^\frac{\pi^2}{6 \log q} q^{-\frac{1}{24}}.
\end{equation}
It agrees with Watson's result for this function \cite{Watson}.
\end{remark}

We exploit the symmetries of the Bernoulli polynomials about $x=1/2$ in the following corollary of Theorem \ref{TheoremPochhammer}.
\begin{corollary} \label{CorollaryPochhammer}
The q-Pochhammer symbol \emph{(\ref{q-Pochhammer})} satisfies the asymptotic reflection formula\emph{:}
\begin{equation*}
\left(q^x,q\right)_\infty \left(q^{1-x},q\right)_\infty \simeq 2 \sin(\pi x)\, e^\frac{\pi^2}{3 \log q} q^{-\frac{1}{2} \left( \frac{1}{6} - x + x^2 \right)}.
\end{equation*}
\end{corollary}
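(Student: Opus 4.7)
The plan is to apply Theorem \ref{TheoremPochhammer} to both $(q^x,q)_\infty$ and $(q^{1-x},q)_\infty$, multiply the two expansions, and simplify term by term using the reflection symmetry $B_k(1-x) = (-1)^k B_k(x)$ of the Bernoulli polynomials.

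First I would handle the non-series prefactor. The product of $\sqrt{2\pi}/\Gamma(x)$ with $\sqrt{2\pi}/\Gamma(1-x)$ collapses via Euler's reflection formula $\Gamma(x)\Gamma(1-x) = \pi/\sin(\pi x)$ to give $2\sin(\pi x)$, which is exactly the prefactor on the right side. Next, the powers of $\log(1/q)$ multiply to $(\log(1/q))^{(\frac{1}{2}-x) + (\frac{1}{2}-(1-x))} = (\log(1/q))^0 = 1$, so they disappear.

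The main step is to combine the exponential products. Adding the exponents gives the series $\sum_{k \neq 1} \frac{\zeta(2-k)}{k!}\bigl(B_k(x) + B_k(1-x)\bigr)(\log q)^{k-1}$. Using $B_k(1-x) = (-1)^k B_k(x)$, every odd-$k$ term cancels and every even-$k$ term doubles. For $k=0$ one gets $2\zeta(2)(\log q)^{-1} = \pi^2/(3\log q)$, producing the factor $e^{\pi^2/(3\log q)}$. For $k=2$, using $\zeta(0)=-1/2$ and $B_2(x) = x^2 - x + 1/6$, one obtains $-\tfrac{1}{2}(\tfrac{1}{6} - x + x^2)\log q$, which exponentiates to $q^{-\frac{1}{2}(\frac{1}{6}-x+x^2)}$.

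The only remaining thing to check—and the one place that deserves a little care—is that all terms with $k \geq 4$ vanish. This is where the $\simeq$ rather than $=$ comes in: for even $k \geq 4$ we have $\zeta(2-k) = 0$ since $2-k$ is a non-positive even integer, so each surviving coefficient is zero. (The odd-$k$ terms would have produced divergent contributions via $\zeta$ at positive integers multiplied by nonzero $B_k(x) + B_k(1-x) = 0$, but the cancellation from the Bernoulli reflection kills them before any issue arises; this is exactly the situation described in the remark following Corollary \ref{CorollaryLambert}.) Combining the three surviving pieces gives precisely the right-hand side of the stated asymptotic identity.
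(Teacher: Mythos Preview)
Your proof is correct and follows essentially the same route as the paper's: apply Theorem~\ref{TheoremPochhammer} to each factor, use Euler's reflection formula for the gamma prefactor, observe the $\log(1/q)$ powers cancel, and then split the exponent sum into the $k=0$, $k=2$, and $k\ge 3$ pieces, the last of which vanishes because $B_k(x)+B_k(1-x)=0$ for odd $k$ and $\zeta(2-k)=0$ for even $k\ge 4$. One small slip in your parenthetical: for odd $k\ge 3$ the argument $2-k$ is a \emph{negative} odd integer, so $\zeta(2-k)$ is finite and nonzero (e.g.\ $\zeta(-1)=-1/12$), not divergent; the odd terms disappear solely because $B_k(x)+B_k(1-x)=0$, not because of any $\zeta$-cancellation.
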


\begin{proof}
\begin{equation}
\begin{aligned}
\left(q^x,q\right)_\infty \left(q^{1-x},q\right)_\infty &= \frac{\sqrt{2 \pi}}{\Gamma(x)} \left(\!\log \frac{1}{q}\right)^{\!\!\frac{1}{2}-x} \prod_{\substack{k=0, \\ k \neq 1}}^\infty e^{\frac{\zeta(2-k)}{k!} B_k(x) (\log q)^{k-1}} \\
&\hspace{0.16 in} \times \frac{\sqrt{2 \pi}}{\Gamma(1-x)} \left(\!\log \frac{1}{q}\right)^{\!x-\frac{1}{2}} \prod_{\substack{k=0, \\ k \neq 1}}^\infty e^{\frac{\zeta(2-k)}{k!} B_k(1-x) (\log q)^{k-1}} \\
&= \frac{2 \pi}{\Gamma(x) \Gamma(1-x)} \prod_{\substack{k=0, \\ k \neq 1}}^\infty e^{\frac{\zeta(2-k)}{k!} [B_k(x) + B_k(1-x)] (\log q)^{k-1}} \\
&= 2 \sin(\pi x) \, e^{\frac{\zeta(2)}{\log q} [B_0(x) + B_0(1-x)]} \, e^{\frac{\zeta(0)}{2} [B_2(x) + B_2(1-x)] \log q} \\
&\hspace{0.16 in} \times \prod_{k=3}^\infty e^{\frac{\zeta(2-k)}{k!} [B_k(x) + B_k(1-x)] (\log q)^{k-1}} \\
&\simeq 2 \sin(\pi x) \, e^{\frac{\pi^2}{3 \log q}} q^{-\frac{1}{2} \left(\frac{1}{6}-x+x^2\right)},
\end{aligned}
\end{equation}
where we used the reflection formula for the gamma function \cite{Abramowitz}. In the proof, the product from $k=3$ to $\infty$ equates to $1$ because $\zeta(2-k)$ is zero for even $k>2$, and $B_k(x) + B_k(1-x) = 0$ for odd $k$.
\end{proof}

To highlight the accuracy of Corollary \ref{CorollaryPochhammer} when $q\not\approx 1$, we provide the following example.
\begin{example} \label{PropositionPochhammer}
\emph{The q-Pochhammer reflection formula at $x=1/4$ and $x=3/4$ is given by the asymptotic relation}:
\begin{equation*}
\big(q^{1/4},q\big)_\infty \big(q^{3/4},q\big)_\infty \simeq \sqrt{2} \, e^{\frac{\pi^2}{3 \log q}} q^\frac{1}{96}.
\end{equation*}
\end{example}
At $q=0.001$, $0.01$, and $0.1$, the relative errors are of order $10^{-5}$, $10^{-8}$, and $10^{-15}$ respectively. All the asymptotic reflection formulas in subsequent sections have similar accuracy as that of Corollary \ref{CorollaryPochhammer}.

\section{q-gamma function} \label{SectionGamma}
The relation (\ref{qgamma}) expresses the q-gamma function in terms of the already analyzed q-Pochhammer symbol. Using this relationship, the main result of this section describing the behavior of the q-gamma function near $q=1$ is stated as the following theorem.
\begin{theorem}\label{TheoremGamma}
The asymptotic expansion of the q-gamma function \emph{(\ref{qgamma})} at $q=1$ is given by\emph{:}
\begin{equation*}
\begin{aligned}
\Gamma_q(x) = \Gamma(x) \left(\frac{\log q}{q-1} \, q^\frac{x}{4}\right)^{\!{x-1}} \prod_{k=3}^\infty e^{-\frac{\zeta(2-k)}{k!} B_k(x) (\log q)^{k-1}}.
\end{aligned}
\end{equation*}
\end{theorem}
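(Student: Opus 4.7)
The plan is to combine the defining identity (\ref{qgamma}),
$$\Gamma_q(x) = (1-q)^{1-x}\,\frac{(q,q)_\infty}{(q^x,q)_\infty},$$
with Theorem \ref{TheoremPochhammer} applied once at argument $x$ and once at argument $1$, and then to simplify the resulting expression using well-known values of the Bernoulli polynomials and of $\zeta$ at non-positive integers.

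First I would substitute the asymptotic form of $(q^x,q)_\infty$ and of $(q,q)_\infty$ (the latter being the $x=1$ specialization, where $\Gamma(1)=1$) into the ratio. The two factors of $\sqrt{2\pi}$ cancel immediately, one inverse $\Gamma$ survives, and the two powers of $\log(1/q)$ combine to $(\log(1/q))^{(x-\tfrac12)-(1-\tfrac12)}=(\log(1/q))^{x-1}$. Multiplying by the remaining prefactor $(1-q)^{1-x}$ and using $\log(1/q)=-\log q$ converts this into $\bigl(\tfrac{\log q}{q-1}\bigr)^{x-1}$, which accounts for the first part of the claimed formula. The two infinite products combine into a single product indexed by $k\neq 1$ whose exponents carry the differences $B_k(1)-B_k(x)$.

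Next I would split this combined product according to the value of $k$. For $k=0$ the Bernoulli polynomial is constant, $B_0\equiv 1$, so $B_0(1)-B_0(x)=0$ and the $k=0$ factor is trivial. For $k=2$ I would compute $B_2(1)-B_2(x) = \tfrac16 - (x^2-x+\tfrac16) = x-x^2$, multiply by $\zeta(0)/2!=-1/4$, and use $\log q$ to recognize the exponent as $\tfrac{x(x-1)}{4}\log q$; this is exactly $\log\bigl((q^{x/4})^{x-1}\bigr)$, producing the $q^{x/4}$ inside the base of the power $(\,\cdot\,)^{x-1}$. For $k\geq 3$ the two separate terms built from $B_k(1)$ and $B_k(x)$ need to reduce to just $-B_k(x)$; this is where the usual dichotomy does the work — for odd $k\geq 3$ we have $B_k=B_k(1)=0$, while for even $k\geq 4$ the coefficient $\zeta(2-k)$ vanishes because $2-k$ is a negative even integer — so in either case the $B_k(1)$ contribution drops out and only $-B_k(x)$ survives in the exponent, matching the product displayed in the theorem.

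The computation is almost entirely mechanical once Theorem \ref{TheoremPochhammer} is invoked; the only step requiring a small amount of care is the $k=2$ rearrangement, where one must track the sign of $\zeta(0)$ and correctly package the contribution as $(q^{x/4})^{x-1}$ rather than as a separate multiplicative factor, so that it merges with $\bigl(\tfrac{\log q}{q-1}\bigr)^{x-1}$ into the compact base appearing in the statement. I would therefore expect the verification of this single exponent — and the bookkeeping needed to confirm that every $k\geq 3$ contribution with $B_k(1)$ is genuinely killed by either $\zeta(2-k)=0$ or $B_k=0$ — to be the only place where an error could creep in.
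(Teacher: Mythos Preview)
Your proposal is correct and follows essentially the same route as the paper: both start from $\Gamma_q(x)=(1-q)^{1-x}(q,q)_\infty(q^x,q)_\infty^{-1}$, insert Theorem~\ref{TheoremPochhammer}, and simplify. The only cosmetic difference is that the paper invokes the pre-simplified Euler expansion~(\ref{Euler}) for $(q,q)_\infty$, whereas you apply Theorem~\ref{TheoremPochhammer} at $x=1$ and carry out the $B_k(1)$-cancellations explicitly at the end --- but the underlying identities (vanishing of $B_k(1)$ for odd $k\ge 3$ and of $\zeta(2-k)$ for even $k\ge 4$, together with the $k=2$ computation producing $q^{x(x-1)/4}$) are identical.
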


\begin{proof}
We begin with (\ref{qgamma}) and replace the two q-Pochhammer symbols with their asymptotic expressions in Theorem \ref{TheoremPochhammer} and equation (\ref{Euler}) in the following manner.
\begin{equation*}
\begin{aligned}
\Gamma_q(x) &= (1-q)^{1-x} (q,q)_\infty (q^x,q)_\infty^{-1} \\
&= (1-q)^{1-x} \sqrt{\frac{2 \pi}{\log(1/q)}} \, e^\frac{\pi^2}{6 \log q} q^{-\frac{1}{24}} \\
&\hspace{0.16 in} \times\Bigg[\frac{\sqrt{2 \pi}}{\Gamma(x)} \left(\!\log \frac{1}{q}\right)^{\!\!\frac{1}{2}-x} \prod_{\substack{k=0, \\ k \neq 1}}^\infty e^{\frac{\zeta(2-k)}{k!} B_k(x) (\log q)^{k-1}}\Bigg]^{-1} \\
&= \Gamma(x) \left(\frac{\log q}{q-1}\right)^{\!\!x-1} \! e^\frac{\pi^2}{6 \log q} q^{-\frac{1}{24}} \\
&\hspace{0.16 in} \times e^{-\frac{\zeta(2)}{\log q} B_0(x)}  e^{-\frac{\zeta(0)}{2} B_2(x) \log q} \prod_{k=3}^\infty e^{-\frac{\zeta(2-k)}{k!} B_k(x) (\log q)^{k-1}} \\
&= \Gamma(x) \left(\frac{\log q}{q-1}\right)^{\!\!x-1} \! e^\frac{\pi^2}{6 \log q} q^{-\frac{1}{24}} \\
&\hspace{0.16 in} \times e^{-\frac{\pi^2}{6 \log q}}  q^{\frac{1}{4} (\frac{1}{6}-x+x^2)} \prod_{k=3}^\infty e^{-\frac{\zeta(2-k)}{k!} B_k(x) (\log q)^{k-1}} \\
&= \Gamma(x) \left(\frac{\log q}{q-1} \, q^\frac{x}{4}\right)^{\!{x-1}} \prod_{k=3}^\infty e^{-\frac{\zeta(2-k)}{k!} B_k(x) (\log q)^{k-1}}.
\end{aligned}
\end{equation*}
\end{proof}

The q-analog of the reflection formula for the gamma function follows from the theorem above and is stated as the following corollary.
\begin{corollary} \label{CorollaryGamma}
The q-gamma function \emph{(\ref{qgamma})} satisfies the asymptotic reflection formula\emph{:}
\begin{equation*}
\Gamma_q(x) \, \Gamma_q(1-x) \simeq \frac{\pi}{\sin(\pi x)} \, \frac{q-1}{\log q} \, q^\frac{x (x-1)}{2}.
\end{equation*}
\end{corollary}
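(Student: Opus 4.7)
The plan is to multiply the two asymptotic expansions obtained from Theorem \ref{TheoremGamma} directly. Writing
\begin{equation*}
\Gamma_q(x)\,\Gamma_q(1-x) = \Gamma(x)\Gamma(1-x)\left(\frac{\log q}{q-1}\right)^{\!(x-1)+(-x)}\!q^{\frac{x(x-1)}{4}+\frac{(1-x)(-x)}{4}}\prod_{k=3}^\infty e^{-\frac{\zeta(2-k)}{k!}[B_k(x)+B_k(1-x)](\log q)^{k-1}},
\end{equation*}
the factor $\Gamma(x)\Gamma(1-x)$ collapses to $\pi/\sin(\pi x)$ by the Euler reflection formula, and the exponents of $\log q/(q-1)$ sum to $-1$, producing the factor $(q-1)/\log q$. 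The two $q$-powers combine as $\frac{x(x-1)}{4}+\frac{x(x-1)}{4}=\frac{x(x-1)}{2}$, which gives the stated $q^{x(x-1)/2}$ factor.

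The remaining task is to argue that the infinite product is asymptotically $1$. For this I would invoke the two standard symmetries of the Bernoulli polynomials and the Riemann zeta function: $B_k(1-x)=(-1)^k B_k(x)$ and $\zeta(2-k)=0$ for even $k\ge 4$. For odd $k\ge 3$ the bracket $B_k(x)+B_k(1-x)$ vanishes, while for even $k\ge 4$ the prefactor $\zeta(2-k)$ vanishes, so every factor in the product is $1$. Combining these pieces yields the claimed identity.

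The main obstacle, such as it is, is purely bookkeeping: one has to be careful that the exponent of $q$ coming from the $q^{x/4}$ factor inside the $(x-1)$-th power in Theorem \ref{TheoremGamma} is $\frac{x(x-1)}{4}$ (and similarly for $1-x$), and that the two contributions add rather than cancel. Once these powers are tracked correctly and the symmetry arguments eliminate the infinite product, the corollary follows with no further analytic work.
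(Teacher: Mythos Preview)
Your argument is correct and mirrors the paper's own proof essentially line for line: the paper also multiplies the two instances of Theorem~\ref{TheoremGamma}, applies the Euler reflection formula, combines the $(\log q)/(q-1)$ and $q$-powers exactly as you do, and then kills the infinite product by noting $B_k(x)+B_k(1-x)=0$ for odd $k$ and $\zeta(2-k)=0$ for even $k\ge 4$. There is nothing to add.
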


\begin{proof}
\begin{equation*}
\begin{aligned}
\Gamma_q(x) \, \Gamma_q(1-x) &= \Gamma(x) \, \Gamma(1-x)\! \left(\frac{\log q}{q-1} \, q^\frac{x}{4}\right)^{\!{x-1}}\left(\frac{\log q}{q-1} \, q^\frac{1-x}{4}\right)^{\!-x}\\
&\text{\hspace{0.235 in}} \times \prod_{k=3}^\infty e^{-\frac{\zeta(2-k)}{k!} B_k(x) (\log q)^{k-1}}\prod_{k=3}^\infty e^{-\frac{\zeta(2-k)}{k!} B_k(1-x) (\log q)^{k-1}}\\
&= \frac{\pi}{\sin(\pi x)} \, \frac{q-1}{\log q} \, q^\frac{x (x-1)}{2} \prod_{k=3}^\infty e^{-\frac{\zeta(2-k)}{k!} [B_k(x)+B_k(1-x)] (\log q)^{k-1}}\\
&\simeq \frac{\pi}{\sin(\pi x)} \, \frac{q-1}{\log q} \, q^\frac{x (x-1)}{2}.
\end{aligned}
\end{equation*}
The product from $k=3$ to $\infty$ in the proof above equates to 1 for the same reasons as discussed in the proof of the q-Pochhammer reflection formula (Corollary \ref{CorollaryPochhammer}).
\end{proof}

As a special case of the reflection formula in Corollary \ref{CorollaryGamma}, we obtain the following proposition.
\begin{proposition} \label{PropositionGamma}
The q-gamma function at $x=1/2$ is given by the asymptotic relation\emph{:}
\begin{equation*}
\Gamma_q\left(\frac{1}{2}\right) \simeq \sqrt{\frac{\pi}{q^{1/8}} \frac{q-1}{\log q}}.
\end{equation*}
\end{proposition}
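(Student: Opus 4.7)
The plan is to obtain this as an immediate specialization of the reflection formula in Corollary \ref{CorollaryGamma} to the symmetric point $x=1/2$. At this value, the two factors on the left-hand side coincide, so that $\Gamma_q(x)\,\Gamma_q(1-x)$ collapses to $\Gamma_q(1/2)^2$, and the problem reduces to simplifying the right-hand side and then extracting a square root.

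Concretely, I would first evaluate each of the three factors on the right-hand side of Corollary \ref{CorollaryGamma} at $x=1/2$: the sine factor gives $\pi/\sin(\pi/2)=\pi$; the factor $(q-1)/\log q$ is unchanged; and the exponent $x(x-1)/2$ evaluates to $-1/8$, yielding $q^{-1/8} = 1/q^{1/8}$. Multiplying these together produces
\begin{equation*}
\Gamma_q(1/2)^2 \simeq \frac{\pi}{q^{1/8}} \, \frac{q-1}{\log q}.
\end{equation*}
Taking the positive square root (which is legitimate because $\Gamma_q(1/2)$ is positive for $0<q<1$, as $\Gamma_q$ is positive on the positive real axis) yields the claimed asymptotic relation.

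There is essentially no obstacle here: the asymptotic equality $\simeq$ is preserved under multiplication of finitely many factors and under taking a square root of a positive asymptotic expression, so the reflection formula of Corollary \ref{CorollaryGamma} transfers directly to the stated identity. The only minor point worth remarking is that the residual infinite product appearing in Theorem \ref{TheoremGamma}, which was responsible for the appearance of $\simeq$ rather than $=$ in Corollary \ref{CorollaryGamma}, survives unchanged upon specialization to $x=1/2$, so the proposition inherits the same $\simeq$ status without any additional analytic input.
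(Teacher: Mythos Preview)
Your proposal is correct and follows exactly the approach the paper indicates: the proposition is stated in the paper simply as a special case of the reflection formula in Corollary~\ref{CorollaryGamma}, and your specialization to $x=1/2$ together with the square root is precisely what is intended.
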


Although in this paper we assume $q<1$, it can be easily seen that the asymptotic expansion of the q-gamma function in Theorem \ref{TheoremGamma} extends to $q\ge 1$. The q-gamma expansion satisfies the $q$ inversion symmetry \cite{Jackson}:
\begin{equation}
\Gamma_q(x) = q^{\frac{(x-1)(x-2)}{2}} \Gamma_{1/q}(x).
\end{equation}
Similarly, the asymptotic expansions for the q-digamma and q-polygamma functions that follow in Sections \ref{SectionDigamma} and \ref{SectionPolygamma} are valid for $q \ge 1$ as well.

\section{q-digamma function} \label{SectionDigamma}
In Section \ref{Introduction} we related q-digamma function to the Lambert series through equation (\ref{qdigamma}). Using this relationship we calculate the behavior of the q-digamma function near $q=1$ as stated in the theorem below.
\begin{theorem}\label{TheoremDigamma}
The expansion of the q-digamma function \emph{(\ref{qdigamma})} at $q=1$ may be expressed in the following manner.
\begin{enumerate}
\item With $-\log (1-q)$ included in a single term\emph{:}
\begin{equation*}
\begin{aligned}
\psi_q(x) = \psi(x) + \log\!\left(\frac{\log q}{q-1}\right) - \sum_{k=1}^\infty \frac{\zeta(1-k)}{k!} B_k(x) (\log q)^k.
\end{aligned}
\end{equation*}

\item  With $-\log (1-q)$ included in its expanded form\emph{:}
\begin{equation*}
\begin{aligned}
\psi_q(x) = \psi(x) + \sum_{k=1}^\infty \frac{\zeta(1-k)}{k!} [1-B_k(x)] (\log q)^k.
\end{aligned}
\end{equation*}
\end{enumerate}
\end{theorem}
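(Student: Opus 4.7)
The plan is straightforward: apply the $s=0$ case of Theorem \ref{TheoremLambert} to the Lambert-series representation $\psi_q(x) = -\log(1-q) + \log q \cdot \LambertL_q(0,x)$ from (\ref{qdigamma}). Multiplying the expansion in Theorem \ref{TheoremLambert}(2) by $\log q$ cancels the leading $1/\log q$ denominator and yields
$$\psi_q(x) = \psi(x) + \log\log\tfrac{1}{q} - \log(1-q) - \sum_{k=1}^\infty \frac{\zeta(1-k)}{k!} B_k(x)(\log q)^k.$$
For part (1), I would simply collapse the two logarithms using $\log\log(1/q) - \log(1-q) = \log\frac{-\log q}{1-q} = \log\frac{\log q}{q-1}$, noting that for $0<q<1$ both $\log q$ and $q-1$ are negative so the ratio is positive.

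For part (2), the task is to absorb the isolated logarithm into the Bernoulli sum. This requires the auxiliary power-series identity
$$\log\frac{\log q}{q-1} = \sum_{k=1}^\infty \frac{\zeta(1-k)}{k!}(\log q)^k,$$
equivalently (with $t = \log q$) the identity $\log\frac{t}{e^t-1} = \sum_{k\ge 1}\frac{\zeta(1-k)}{k!}t^k$. The cleanest derivation is via logarithmic differentiation: from (\ref{B(x)}) at $x=0$ one obtains $\frac{d}{dt}\log\frac{t}{e^t-1} = \frac{1}{t} - 1 - \frac{1}{e^t-1} = -\tfrac{1}{2} - \sum_{n\ge 2}\frac{B_n}{n!}t^{n-1}$, and integrating termwise (the constant of integration vanishes because $\log\frac{t}{e^t-1} \to 0$ as $t \to 0$) together with $\zeta(1-k) = -B_k/k$ for $k \ge 1$ produces the claimed identity. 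Substituting this series into the form obtained in (1) and combining the two sums term by term gives (2).

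The only genuine computation is the Bernoulli-to-zeta series identity, and even that is a short standard manipulation, so I do not anticipate any real obstacle. The one thing to be careful about is matching signs so that $\log\log(1/q)$ is properly recognized as $\log\frac{\log q}{q-1}$ for $0<q<1$, and that the $n=1$ contribution $-t/2$ in the integrated series is correctly identified with $\zeta(0)/1! \cdot t$.
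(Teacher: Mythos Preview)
Your proposal is correct and follows essentially the same approach as the paper: for (1) you substitute the $s=0$ expansion from Theorem~\ref{TheoremLambert} into (\ref{qdigamma}) and combine $-\log(1-q)+\log\log\frac{1}{q}$ into $\log\frac{\log q}{q-1}$, and for (2) you expand this logarithm as $\sum_{k\ge 1}\frac{\zeta(1-k)}{k!}(\log q)^k$, which is exactly what the paper means by ``$-\log(1-q)$ in its expanded form'' (after the $\log\log\frac{1}{q}$ cancellation). Your derivation of the auxiliary identity via $\zeta(1-k)=-B_k/k$ and logarithmic differentiation of the Bernoulli generating function is a clean way to supply the one detail the paper leaves implicit.
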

\begin{remark}
Both expansions are diverging series, but expansion (1) shows better asymptotic convergence if truncated at small $k$ values. Expansion (2) allows for easier calculations of the derivatives with respect to $\log q$.
\end{remark}
\begin{proof}
To prove (1), we replace $\LambertL_q(0,x)$ in (\ref{qdigamma}) with its expansion in Theorem \ref{TheoremLambert}. Combining $-\log (1-q)$ and $\log \log (1/q)$ into a single term completes the proof.
For the proof of (2), we replace $-\log(1-q)$ in (\ref{qdigamma}) with its expanded form as well. Collecting the terms in powers of $\log q$ yields the result in (2).
\end{proof}

A q-analog of the digamma reflection formula is stated as the following proposition.
\begin{proposition} \label{PropositionDigamma}
The q-digamma function \emph{(\ref{qdigamma})} satisfies the asymptotic reflection formula\emph{:}
\begin{equation*}
\psi_q(x) - \psi_q(1-x) \simeq -\pi \cot(\pi x) + \left(x-\frac{1}{2}\right) \log q.
\end{equation*}
\end{proposition}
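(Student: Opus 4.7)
The plan is to apply Theorem \ref{TheoremDigamma}(1) to both $\psi_q(x)$ and $\psi_q(1-x)$, subtract the two expansions, and then exploit the reflection symmetries of the digamma function, the Bernoulli polynomials, and the Riemann zeta function to collapse what looks like a divergent asymptotic series down to a single surviving term.

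First I would write the difference
\begin{equation*}
\psi_q(x) - \psi_q(1-x) = [\psi(x) - \psi(1-x)] - \sum_{k=1}^\infty \frac{\zeta(1-k)}{k!}\,[B_k(x) - B_k(1-x)]\,(\log q)^k,
\end{equation*}
noting that the term $\log((\log q)/(q-1))$ is independent of $x$ and cancels at once. For the leading bracket I would invoke the classical digamma reflection formula $\psi(x) - \psi(1-x) = -\pi\cot(\pi x)$, which produces the first term on the right-hand side of the proposition.

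For the sum I would use two standard identities in succession. The Bernoulli polynomial reflection $B_k(1-x) = (-1)^k B_k(x)$ makes the bracket $B_k(x) - B_k(1-x)$ vanish identically for every even $k$, and equal to $2 B_k(x)$ for every odd $k$. Among the surviving odd indices, $\zeta(1-k) = \zeta(-(k-1))$ is zero whenever $k-1$ is a positive even integer, i.e.\ for all odd $k \ge 3$. Hence only the $k=1$ contribution survives, and substituting $\zeta(0) = -1/2$ together with $B_1(x) = x-1/2$ yields exactly $(x - 1/2)\log q$.

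There is no real obstacle in this proof; the only point worth flagging is the use of $\simeq$ rather than $=$. As in the remark following Corollary \ref{CorollaryLambert}, the asymptotic series of Theorem \ref{TheoremDigamma}(1) is formally divergent, but the cancellations above leave only finitely many terms, so the identity becomes an exact asymptotic equality in the limit $q \to 1^-$.
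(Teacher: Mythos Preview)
Your proof is correct. The expansion from Theorem~\ref{TheoremDigamma}(1), the cancellation of the $x$-independent logarithm, the digamma reflection formula, and the Bernoulli/zeta parity argument all go through exactly as you describe, leaving only the $k=1$ term.

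The paper, however, takes a different route: it simply takes the logarithmic derivative with respect to $x$ of the $q$-gamma reflection formula already established in Corollary~\ref{CorollaryGamma}. Since $\psi_q(x) = \frac{d}{dx}\log\Gamma_q(x)$, differentiating $\log\bigl[\Gamma_q(x)\Gamma_q(1-x)\bigr] \simeq \log\bigl[\pi/\sin(\pi x)\bigr] + \tfrac{x(x-1)}{2}\log q + \text{const}$ immediately gives $\psi_q(x)-\psi_q(1-x) \simeq -\pi\cot(\pi x) + (x-\tfrac12)\log q$. This is a one-line argument that recycles the cancellation work already done in Corollary~\ref{CorollaryPochhammer} and Corollary~\ref{CorollaryGamma}. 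Your approach instead redoes that cancellation directly at the digamma level; it is more self-contained (not relying on the $q$-gamma result) and makes the mechanism of the $\simeq$ completely transparent, at the cost of repeating the same $B_k(x)+(-1)^{k+1}B_k(1-x)$ and $\zeta(\text{negative even})=0$ bookkeeping that the paper already carried out upstream.
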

\begin{proof}
The proof follows by taking the logarithmic derivative with respect to $x$ of the q-gamma reflection formula in Corollary \ref{CorollaryGamma}.
\end{proof}

\section{q-polygamma functions} \label{SectionPolygamma}
The q-polygamma functions are related to the Lambert series through equation (\ref{qpolygamma}). Using this relationship we state the asymptotic behavior of the q-polygamma functions in this theorem below.
\begin{theorem}\label{TheoremPolygamma}
The q-polygamma functions $\psi_q^{(m)}(x)$ for $m \ge 1$ have the following expansion at $q=1$\emph{:}
\begin{equation*}
\begin{aligned}
\psi_q^{(m)}(x) = \psi^{(m)}(x) - \sum_{k=0}^\infty \frac{\zeta(1-m-k)}{k!} B_k(x) (\log q)^{m+k}.\end{aligned}
\end{equation*}
\end{theorem}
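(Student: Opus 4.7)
The plan is to apply Theorem \ref{TheoremLambert} part (1) at $s=m$, which is permitted since $m\ge 1$ lies outside the forbidden set $\{0,-1,-2,\ldots\}$. This gives an asymptotic expansion of $\LambertL_q(m,x)$ in powers of $\log q$, which I will then convert into one for $\psi_q^{(m)}(x)$ via the identity $\psi_q^{(m)}(x) = (\log q)^{m+1} \LambertL_q(m,x)$ from equation (\ref{qpolygamma}).

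Concretely, the first step is to write
\begin{equation*}
\psi_q^{(m)}(x) = (\log q)^{m+1}\left[\frac{\Gamma(1+m)\,\zeta(1+m,x)}{\left(\log\tfrac{1}{q}\right)^{1+m}} - \sum_{k=0}^\infty \frac{\zeta(1-m-k)}{k!} B_k(x)(\log q)^{k-1}\right].
\end{equation*}
For the sum, the factor $(\log q)^{m+1}\cdot (\log q)^{k-1}$ collapses immediately to $(\log q)^{m+k}$, reproducing the series in the theorem statement with the correct overall minus sign. The only nontrivial step is showing that the leading non-series term equals the classical polygamma $\psi^{(m)}(x)$.

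For that leading term, I use $\log(1/q) = -\log q$ so that $\left(\log(1/q)\right)^{1+m} = (-1)^{m+1}(\log q)^{m+1}$. The $(\log q)^{m+1}$ factors cancel, leaving $(-1)^{m+1}\,m!\,\zeta(m+1,x)$. Invoking the standard identity
\begin{equation*}
\psi^{(m)}(x) = (-1)^{m+1}\,m!\,\zeta(m+1,x), \qquad m\ge 1,
\end{equation*}
identifies this as $\psi^{(m)}(x)$, completing the proof.

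I do not anticipate a real obstacle here: everything reduces to bookkeeping with signs from $\log(1/q)$ versus $\log q$ and recognizing the Hurwitz-zeta representation of the polygamma function. The one place to be careful is to confirm that the condition $m\ne 0,-1,-2,\ldots$ needed to apply case (1) of Theorem \ref{TheoremLambert} indeed holds for all $m\ge 1$, so no separate treatment is needed.
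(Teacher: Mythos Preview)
Your proposal is correct and follows essentially the same approach as the paper: substitute the expansion of $\LambertL_q(m,x)$ from Theorem~\ref{TheoremLambert}(1) into the relation $\psi_q^{(m)}(x) = (\log q)^{m+1}\LambertL_q(m,x)$, and then identify the leading term using $(-1)^{m+1}\,m!\,\zeta(m+1,x) = \psi^{(m)}(x)$. The paper states this in a single sentence, while you spell out the sign bookkeeping with $\log(1/q)$ versus $\log q$ explicitly, but the argument is the same.
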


\begin{proof}
Substituting the asymptotic expansion of $\LambertL_q(m,x)$ into (\ref{qpolygamma}) and noting that $(-1)^{1+m} \Gamma(1+m) \, \zeta(1+m,x) =\psi^{(m)}(x)$ when $m$ is a positive integer completes the proof.
\end{proof}

The q-analogs of the polygamma reflection formulas are stated as the following proposition.
\begin{proposition}\label{PropositionPolygamma}
The q-polygamma functions \emph{(\ref{qpolygamma})} asymptotically satisfy the following reflection formula.
\begin{enumerate}
\item For $m=1$\emph{:}
\begin{equation*}
\begin{aligned}
\psi_q^{(1)}(x) +\psi_q^{(1)}(1-x) \simeq [\pi \csc(\pi x)]^2 + \log q.
\end{aligned}
\end{equation*}

\item For $m \geq 2$\emph{:}
\begin{equation*}
\begin{aligned}
\psi_q^{(m)}(x) - (-1)^m \psi_q^{(m)}(1-x) \simeq \psi^{(m)}(x) - (-1)^m \psi^{(m)}(1-x).
\end{aligned}
\end{equation*}

\end{enumerate}
\end{proposition}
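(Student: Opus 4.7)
The plan is to obtain both reflection formulas by differentiating the q-digamma reflection formula of Proposition \ref{PropositionDigamma} successively with respect to $x$. Using the chain rule, $\frac{d^m}{dx^m}\psi_q(1-x) = (-1)^m \psi_q^{(m)}(1-x)$, so differentiating the relation $\psi_q(x)-\psi_q(1-x)\simeq -\pi\cot(\pi x)+(x-\tfrac{1}{2})\log q$ exactly $m$ times produces an asymptotic identity whose left-hand side is precisely $\psi_q^{(m)}(x)-(-1)^m\psi_q^{(m)}(1-x)$.

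For the $m=1$ case in (1), the first derivative of $-\pi\cot(\pi x)$ is $\pi^2\csc^2(\pi x) = [\pi\csc(\pi x)]^2$ and the derivative of $(x-\tfrac{1}{2})\log q$ is $\log q$, yielding the stated result. For $m\geq 2$ in (2), the linear polynomial $(x-\tfrac{1}{2})\log q$ vanishes under $d^m/dx^m$, leaving only $-\pi\, d^m\!/dx^m \cot(\pi x)$; applying the same chain-rule computation to the classical digamma reflection $\psi(x)-\psi(1-x)=-\pi\cot(\pi x)$ shows this equals $\psi^{(m)}(x)-(-1)^m\psi^{(m)}(1-x)$, matching the right-hand side of (2) exactly.

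The main point to address is the legitimacy of differentiating an asymptotic relation in $x$. This is justified because Proposition \ref{PropositionDigamma} arises from Theorem \ref{TheoremDigamma}, whose expansion is a formal series in $\log q$ with coefficients that depend polynomially (via Bernoulli polynomials) on $x$; such series may be differentiated with respect to $x$ coefficient-wise. As an independent check, one can substitute Theorem \ref{TheoremPolygamma} directly into the left-hand side and use the Bernoulli reflection $B_k(1-x)=(-1)^k B_k(x)$: the sum collapses to terms with $m+k$ odd, in which case $\zeta(1-m-k)$ is evaluated at a non-positive even integer and vanishes, except in the single case $m=1$, $k=0$, which contributes exactly $\log q$. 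This recovers both parts and confirms that all q-corrections to the higher polygamma reflection formulas are asymptotically negligible.
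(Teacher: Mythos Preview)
Your proof is correct and follows essentially the same approach as the paper: differentiate the q-digamma reflection formula of Proposition~\ref{PropositionDigamma} $m$ times and use the classical reflection formula to rewrite the right-hand side for $m\ge 2$. You supply more detail than the paper does (the chain-rule bookkeeping, the justification for differentiating the asymptotic expansion, and the independent verification via Theorem~\ref{TheoremPolygamma} and the Bernoulli reflection identity), but the core argument is identical.
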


\begin{proof}
The proof follows from taking the $m^\text{th}$ derivative with respect to $x$ of the q-digamma reflection formula in Proposition \ref{PropositionDigamma}. In (2) we have chosen to write $\psi^{(m)}(x) - (-1)^m \psi^{(m)}(1-x)$ instead of $\frac{d^m}{dx^m} [-\pi \cot(\pi x)]$.
\end{proof}

Some simple special cases of Theorem \ref{TheoremPolygamma} and Proposition \ref{PropositionPolygamma} are expressed in the following example.
\begin{example} \label{ExamplePolygamma}
\emph{For odd $m\ge 1$ the q-polygamma functions at $x=1$ and at $x=1/2$ are given by the following asymptotic relations}:
\begin{equation*}
\begin{aligned}
&\hspace{-0.26in}(1)\hspace{0.2in} \psi_q^{(m)}(1) \simeq \psi^{(m)}(1) - \zeta(1-m) (\log q)^m - \frac{1}{2} \zeta(-m)(\log q)^{m+1}. \\
&\hspace{-0.26in}(2)\hspace{0.2in} \psi_q^{(1)}\!\!\left(\tfrac{1}{2}\right) \simeq \frac{\pi^2}{2} + \frac{1}{2} \log q.\\
&\hspace{-0.26in}(3)\hspace{0.2in} \psi_q^{(m)}\!\!\left(\tfrac{1}{2}\right) \simeq \psi^{(m)}\!\!\left(\tfrac{1}{2}\right).
\end{aligned}
\end{equation*}
\end{example}
Note that the generating function of the divisor function in (1) of Corollary \ref{CorollaryLambert} is equal to $\psi_q^{(m)}(1)/(\log q)^{m+1}$ in the example above.

\section{Jacobi theta functions}
In order to analyze the behavior of the Jacobi theta functions near $q=1$ we exploit the fact that these functions can be expressed in terms of the q-Pochhammer symbol. To simplify the proof of the main theorem that follows, we make a change of variables and rewrite the relation (\ref{triple product}) in the following manner:
\begin{equation}\label{LemmaTheta}
\vartheta_4\left(\frac{y \log q}{2 i},\sqrt{q}\right) = \left(q,q\right)_\infty \, \left[\left(q^{\frac{1}{2}+y},q\right)_\infty \left(q^{\frac{1}{2}-y},q\right)_\infty\right].
\end{equation}
Writing (\ref{triple product}) in this manner allows us to relate the asymptotic form of the fourth Jacobi theta function to two already established simpler asymptotic results: the Euler function expansion (\ref{Euler}), and the q-Pochhammer reflection formula in Corollary \ref{CorollaryPochhammer} (with  $x=\tfrac{1}{2}+y$).

We now present the main result of this section describing the asymptotic behavior of the Jacobi theta functions.
\begin{theorem}\label{TheoremTheta}
The Jacobi theta functions \emph{(\ref{theta 1})} and \emph{(\ref{theta 3})} satisfy the following asymptotic relations at $q=1$\emph{:}
\begin{equation*}
\begin{aligned}
\vartheta_1\!\left(z+\frac{\pi}{2},q\right) = \vartheta_2(z,q) \simeq \sqrt{\frac{\pi}{\log(1/q)}} \left[e^\frac{z_\pi^2}{\log q} - e^\frac{(\pi-z_\pi)^2}{\log q}\right] (-1)^{\lfloor\frac{z}{\pi}\rfloor}
\end{aligned}
\end{equation*}
and
\begin{equation*}
\begin{aligned}
\vartheta_3(z,q) = \vartheta_4\!\left(z+\frac{\pi}{2},q\right) \simeq \sqrt{\frac{\pi}{\log(1/q)}} \left[e^\frac{z_\pi^2}{\log q} + e^\frac{(\pi-z_\pi)^2}{\log q}\right],
\end{aligned}
\end{equation*}
where $z_\pi$ is $z$ mod $\pi$, and $\lfloor\frac{z}{\pi}\rfloor$ is the floor of $\frac{z}{\pi}$.
\end{theorem}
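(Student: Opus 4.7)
The plan is to derive the asymptotic form of $\vartheta_4(w,q)$ first, then specialize via $\vartheta_3(z,q)=\vartheta_4(z+\pi/2,q)$, and combine (\ref{theta1-theta4}) with $\vartheta_2(z,q)=\vartheta_1(z+\pi/2,q)$ for $\vartheta_2$. Starting from the rewritten triple product (\ref{LemmaTheta}), I would substitute the Euler function asymptotic (\ref{Euler}) for $(q,q)_\infty$ and the q-Pochhammer reflection formula (Corollary \ref{CorollaryPochhammer}) with $x=\tfrac{1}{2}+y$ into the bracketed pair. Using $\sin(\pi(\tfrac{1}{2}+y))=\cos(\pi y)$ and noting the clean cancellations of the $q^{\pm 1/24}$ factors together with the addition of the $\pi^2/6$ and $\pi^2/3$ contributions into $\pi^2/2$, one obtains
\begin{equation*}
\vartheta_4\!\left(\tfrac{y\log q}{2i},\sqrt{q}\right)\simeq 2\sqrt{\tfrac{2\pi}{\log(1/q)}}\,\cos(\pi y)\,e^{\pi^2/(2\log q)}\,q^{-y^2/2}.
\end{equation*}
Replacing $q$ by $q^2$ to convert the nome from $\sqrt{q}$ to $q$ yields the analogous formula for $\vartheta_4(y\log q/i,q)$ with prefactor $2\sqrt{\pi/\log(1/q)}$, the exponential $e^{\pi^2/(4\log q)}$, and $q^{-y^2}$.

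Next I would set $w=y\log q/i$, so $y=iw/\log q$ is purely imaginary when $w$ is real, rewrite $\cos(\pi y)=\cosh(\pi w/\log q)$ and $q^{-y^2}=e^{w^2/\log q}$, split the hyperbolic cosine into two exponentials, and complete the square in each to get
\begin{equation*}
\vartheta_4(w,q)\simeq \sqrt{\tfrac{\pi}{\log(1/q)}}\bigl[e^{(w+\pi/2)^2/\log q}+e^{(w-\pi/2)^2/\log q}\bigr].
\end{equation*}
These are precisely the two dominant terms of the Poisson-sum representation of $\vartheta_4$ when $w$ lies in the fundamental strip $[-\pi/2,\pi/2]$. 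For $\vartheta_3(z,q)=\vartheta_4(z+\pi/2,q)$ with $z\in[0,\pi)$, I would invoke the $\pi$-periodicity of $\vartheta_4$ to rewrite $\vartheta_4(z+\pi/2,q)=\vartheta_4(z-\pi/2,q)$ and apply the intermediate formula with $w=z-\pi/2$; this gives $(w-\pi/2)^2=(z-\pi)^2=(\pi-z)^2$, producing the theorem's $e^{z_\pi^2/\log q}+e^{(\pi-z_\pi)^2/\log q}$ form for all real $z$ after reducing $z$ modulo $\pi$.

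For $\vartheta_2$, combining $\vartheta_2(z,q)=\vartheta_1(z+\pi/2,q)$ with (\ref{theta1-theta4}) gives $\vartheta_2(z,q)=q^{1/4}e^{iz}\,\vartheta_4\!\bigl(z+\pi/2+\tfrac{\log q}{2i},q\bigr)$. Substituting $w=z+\pi/2-i\log q/2$ into the intermediate $\vartheta_4$ formula and expanding each exponent $(w\pm\pi/2)^2/\log q$ as $(z+\pi/2\pm\pi/2)^2/\log q - i(z+\pi/2\pm\pi/2) - \log q/4$, the factor $e^{-\log q/4}=q^{-1/4}$ cancels the $q^{1/4}$ prefactor, while $e^{-i(z+\pi/2\pm\pi/2)}$ combines with $e^{iz}$ to give either $1$ or $e^{-i\pi}=-1$, producing the minus sign between the two terms. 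This yields $\sqrt{\pi/\log(1/q)}[e^{z^2/\log q}-e^{(z+\pi)^2/\log q}]$ for $z\in[-\pi,0]$; reducing general $z$ to this strip via the anti-periodicity $\vartheta_2(z+\pi,q)=-\vartheta_2(z,q)$ generates both the $(-1)^{\lfloor z/\pi\rfloor}$ factor and the $z_\pi$ substitution in the theorem.

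The principal obstacle is that the intermediate $\vartheta_4$ formula derived from the q-Pochhammer approach is not itself $\pi$-periodic in $w$, even though $\vartheta_4$ is. The Poisson representation shows that it captures the two nearest terms only within one fundamental strip, so the crucial step is the periodicity-based reduction (encoded by $z_\pi$, and $\lfloor z/\pi\rfloor$ for $\vartheta_2$) that converts a naive $(z+\pi)^2$ into the asserted $(\pi-z_\pi)^2$; keeping the sign bookkeeping consistent with the anti-periodicity of $\vartheta_2$ is the main piece of careful work.
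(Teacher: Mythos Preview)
Your proposal is correct and follows essentially the same route as the paper: start from (\ref{LemmaTheta}), insert the Euler asymptotic (\ref{Euler}) and Corollary~\ref{CorollaryPochhammer} at $x=\tfrac12+y$, change variables to obtain the intermediate $\vartheta_4$ formula, shift by $\pi/2$ for $\vartheta_3$, and then use (\ref{theta1-theta4}) for $\vartheta_1,\vartheta_2$. Your explicit handling of the periodicity reduction (writing $\vartheta_4(z+\tfrac{\pi}{2},q)=\vartheta_4(z-\tfrac{\pi}{2},q)$ before applying the intermediate formula, and tracking the antiperiodicity sign for $\vartheta_2$) is in fact slightly more careful than the paper's brief ``replace $z$ by $z\bmod\pi$'' remark, but the underlying argument is the same.
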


\begin{proof}
Using the asymptotic forms of the Euler function (\ref{Euler}) and Corollary \ref{CorollaryPochhammer} with $x=\tfrac{1}{2}+y$ and substituting into (\ref{LemmaTheta}) gives:
\begin{equation*}
\begin{aligned}
\vartheta_4\left(\frac{y \log q}{2 i},\sqrt{q}\right) &\simeq \sqrt{\frac{2\pi}{\log(1/q)}}\,e^{\frac{\pi^2}{6\log q}} \, q^{-\frac{1}{24}} \left[ 2 \cos(\pi y)\, e^{\frac{\pi^2}{3\log q}} q^{\frac{1}{2}\left(\!\frac{1}{12}-y^2\right)}\right]\\
&=2 \cos(\pi y) \sqrt{\frac{2\pi}{\log(1/q)}}\, e^{\frac{\pi^2}{2\log q}} \, q^{-\frac{y^2}{2}}.
\end{aligned}
\end{equation*}
By substituting $y=\frac{i2z}{\log q}$ and replacing $q$ with $q^2$ we have:
\begin{equation*}
\begin{aligned}
\vartheta_4\left(z,q \right) &\simeq 2 \cos \!\left( \frac{i\pi z}{\log q}\right) \sqrt{\frac{\pi}{\log(1/q)}}\, e^{\frac{\pi^2}{4\log q}} \, q^{-\left(\!\frac{i z}{\,\log q}\right)^2}\\
&= 2 \cosh \! \left( \frac{\pi z}{\log q}\right)\sqrt{\frac{\pi}{\log(1/q)}} \, e^{\frac{(\pi\!/2)^2+z^2}{\log q}}\\
&= \sqrt{\frac{\pi}{\log(1/q)}} \, \left[e^{\frac{(\frac{\pi}{2}+z)^2\!}{\log q}} + e^{\frac{(\frac{\pi}{2}-z)^2\!}{\log q}} \right].
\end{aligned}
\end{equation*}
Evaluating the expression above at $z+\pi/2$ and then replacing $z$ with $z\!\!\mod \pi$ completes the proof for $\vartheta_3(z,q) = \vartheta_4\!\left(z+\frac{\pi}{2},q\right)$. The modulo operation is arbitrary. However, this choice ($0 \le z < \pi$) is the most natural one for the final expression above given that both $\vartheta_3(z,q)$ and $\vartheta_4(z,q)$ are $\pi$-periodic in $z$.

The proof of the other half of the theorem follows trivially by using the final expression above and applying it in (\ref{theta1-theta4}) to calculate $\vartheta_1\!\left(z+\frac{\pi}{2},q\right) = \vartheta_2(z,q)$. The floor function factor $\lfloor\frac{z}{\pi}\rfloor$ is introduced arbitrarily at the end along with the replacement of $z$ with $z\!\!\mod \pi$ (for similar reasons as above) by noting that both $\vartheta_1(z,q)$ and $\vartheta_2(z,q)$ are $\pi$-antiperiodic in $z$.
\end{proof}

Many infinite series can be expressed in terms of the Jacobi theta functions \cite{Whittaker}. Of particular interest are the logarithmic derivatives of the Jacobi theta functions that have simple Lambert series like forms. For example,
\begin{equation}
\frac{\vartheta_1'\!\left(z ,q\right)}{\vartheta_1\!\left(z,q\right)} = \cot z + 4 \sum_{n=1}^\infty \frac{q^{2n}}{1-q^{2n}} \sin (2nz).
\end{equation}

Using Theorem \ref{TheoremTheta} we can sum such series in asymptotic form as stated below.
\begin{corollary} \label{CorollaryTheta}
The logarithmic derivatives with respect to $z$ of the Jacobi theta functions satisfy the following asymptotic relations:
\begin{equation*}
\frac{\vartheta_1'\!\left(z+\frac{\pi}{2},q\right)}{\vartheta_1\!\left(z+\frac{\pi}{2},q\right)} = \frac{\vartheta_2'(z,q)}{\vartheta_2(z,q)} \simeq \frac{1}{\log q} \left[2 \left(\!z_\pi - \frac{\pi}{2}\right) + \pi \coth\!\left(\!\frac{\pi \left(z_\pi - \frac{\pi}{2}\right)}{\log q}\!\right)\right],
\end{equation*}
and
\begin{equation*}
\frac{\vartheta_3'(z,q)}{\vartheta_3(z,q)} = \frac{\vartheta_4'\!\left(z+\frac{\pi}{2},q\right)}{\vartheta_4\!\left(z+\frac{\pi}{2},q\right)} \simeq \frac{1}{\log q} \left[2 \left(\!z_\pi - \frac{\pi}{2}\right) + \pi \tanh\!\left(\!\frac{\pi \left(z_\pi - \frac{\pi}{2}\right)}{\log q}\!\right)\right].
\end{equation*}
\end{corollary}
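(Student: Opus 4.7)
The plan is to take the logarithm of each asymptotic relation in Theorem \ref{TheoremTheta}, differentiate with respect to $z$, and recognize the result as a hyperbolic function. I would work on a single period $0 < z < \pi$ (so that $z_\pi = z$), where the square-root prefactor and the locally constant sign $(-1)^{\lfloor z/\pi \rfloor}$ drop out under logarithmic differentiation, leaving only contributions from the two exponentials $e^a$ and $e^b$ with $a = z^2/\log q$ and $b = (\pi - z)^2/\log q$.

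For $\vartheta_3$, direct differentiation of the asymptotic form gives
\begin{equation*}
\frac{\vartheta_3'(z,q)}{\vartheta_3(z,q)} \simeq \frac{1}{\log q}\cdot\frac{2z\,e^a - 2(\pi - z)\,e^b}{e^a + e^b}.
\end{equation*}
The key algebraic step is the decomposition
\begin{equation*}
2z\,e^a - 2(\pi - z)\,e^b = (2z - \pi)(e^a + e^b) + \pi(e^a - e^b),
\end{equation*}
which rewrites the quotient as $(2z - \pi) + \pi\,(e^a - e^b)/(e^a + e^b)$. Since $(e^a - e^b)/(e^a + e^b) = \tanh((a-b)/2)$ and $(a-b)/2 = \pi(z - \pi/2)/\log q$, the claimed formula follows on $0 < z < \pi$; replacing $z$ with $z_\pi$ extends it by $\pi$-periodicity. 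The argument for $\vartheta_2$ is completely parallel: the bracket is $e^a - e^b$ instead of $e^a + e^b$, differentiation produces the numerator $2z\,e^a + 2(\pi - z)\,e^b$, which decomposes as $(2z - \pi)(e^a - e^b) + \pi(e^a + e^b)$, and the ratio $(e^a + e^b)/(e^a - e^b) = \coth((a-b)/2)$ yields the $\coth$ version.

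The one subtlety worth flagging is the discontinuous sign factor $(-1)^{\lfloor z/\pi \rfloor}$ attached to $\vartheta_2$. Since this factor is locally constant on the complement of $\pi\mathbb{Z}$, it contributes nothing to $d(\log \vartheta_2)/dz$ on those open intervals, which is precisely where the logarithmic derivative is smooth. I do not anticipate a genuine obstacle: once the two algebraic decompositions above are written down, the identifications with $\tanh$ and $\coth$ are immediate via the standard formulas $(e^x \pm e^{-x})/(e^x \mp e^{-x})$. The main task is simply to present the hyperbolic recognitions cleanly and to handle the mod-$\pi$ bookkeeping honestly so that the final statement is valid for all $z \notin \pi\mathbb{Z}$.
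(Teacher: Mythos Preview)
Your proposal is correct and follows exactly the approach the paper uses: take logarithmic derivatives of the asymptotic expressions in Theorem~\ref{TheoremTheta} on the fundamental interval $0\le z<\pi$ and then extend by $\pi$-periodicity. In fact, the paper's proof is a two-sentence sketch that omits the algebraic decompositions and the $\tanh/\coth$ identifications you have written out, so your version is strictly more detailed than what the paper presents.
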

\begin{proof}
All the logarithmic derivatives of the theta functions are $\pi$-periodic in $z$\cite{Whittaker}. Taking the logarithmic derivatives of expressions in Theorem \ref{TheoremTheta} in the range $0\le z < \pi$, and then replacing $z$ with $z \! \!\mod \pi$ completes the proof.
\end{proof}
Finally, from applications of Theorem \ref{TheoremEisenstein} and Theorem \ref{TheoremTheta}, the Jacobi theta functions and the modified Eisenstein series can be related to each other as follows.
\begin{proposition} \label{PropositionTheta}
For all positive integers $k\ge 1$, the asymptotic expansion of modified Eisenstein series may be expressed in terms of the Jacobi theta functions as\emph{:}
\begin{equation*}
\tilde{E}_{2k}(q) \simeq \left[\frac{\vartheta_2(0,q^{1/2})+\vartheta_3(0,q^{1/2})}{2\sqrt{i}}\right]^{4k}.
\end{equation*}
\end{proposition}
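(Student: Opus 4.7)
The plan is to start from the asymptotic form of $\tilde{E}_{2k}(q)$ supplied by Theorem \ref{TheoremEisenstein}, namely $\tilde{E}_{2k}(q) \simeq \left(\frac{2\pi i}{\log q}\right)^{2k}$, and then show that the right-hand side of the proposition has exactly this asymptotic form. Since the exponent is $4k$, it suffices to verify the square-root-level identity
\[
\frac{\vartheta_2(0,q^{1/2})+\vartheta_3(0,q^{1/2})}{2\sqrt{i}} \simeq \sqrt{\frac{2\pi i}{\log q}}
\]
and then raise both sides to the $4k$-th power.

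First I would specialize Theorem \ref{TheoremTheta} to $z=0$. At $z=0$ we have $z_\pi = 0$ and $\lfloor z/\pi\rfloor = 0$, so the asymptotic forms reduce cleanly to
\[
\vartheta_2(0,q) \simeq \sqrt{\tfrac{\pi}{\log(1/q)}}\bigl[1 - e^{\pi^2/\log q}\bigr],\qquad
\vartheta_3(0,q) \simeq \sqrt{\tfrac{\pi}{\log(1/q)}}\bigl[1 + e^{\pi^2/\log q}\bigr].
\]
The key observation is that when these two expressions are summed, the exponential terms cancel exactly, leaving $\vartheta_2(0,q) + \vartheta_3(0,q) \simeq 2\sqrt{\pi/\log(1/q)}$. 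Replacing $q$ by $q^{1/2}$ merely changes $\log(1/q)$ to $\tfrac{1}{2}\log(1/q)$, producing $\vartheta_2(0,q^{1/2}) + \vartheta_3(0,q^{1/2}) \simeq 2\sqrt{2\pi/\log(1/q)}$.

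Next I would divide by $2\sqrt{i}$ to get $\sqrt{2\pi/(i\log(1/q))}$, and simplify $1/(i\log(1/q)) = i/\log q$ using $\log(1/q) = -\log q$ and $1/i = -i$. This gives the intermediate identity $\bigl[\vartheta_2(0,q^{1/2})+\vartheta_3(0,q^{1/2})\bigr]/(2\sqrt{i}) \simeq \sqrt{2\pi i/\log q}$. Raising to the $4k$-th power yields $\left(\frac{2\pi i}{\log q}\right)^{2k}$, which matches $\tilde E_{2k}(q)$ asymptotically by Theorem \ref{TheoremEisenstein}, completing the proof.

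There is no real obstacle here, just a bit of bookkeeping of $\sqrt{i}$ versus $i$ and of the sign of $\log q$; the substantive step is the cancellation of the $e^{\pi^2/\log q}$ contributions in the sum $\vartheta_2 + \vartheta_3$, which is exactly what makes the right-hand side of the proposition as clean as $\tilde{E}_{2k}$. The only care required in the write-up is to make clear that $\sqrt{i}$ on the right-hand side is the principal branch, so that $(\sqrt{i})^{4k} = i^{2k} = (-1)^k$ matches the $i^{2k}$ factor coming out of $(2\pi i/\log q)^{2k}$.
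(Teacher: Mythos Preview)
Your proposal is correct and follows exactly the route the paper indicates: the paper does not write out a proof but states that the proposition follows ``from applications of Theorem~\ref{TheoremEisenstein} and Theorem~\ref{TheoremTheta},'' which is precisely your argument---evaluate Theorem~\ref{TheoremTheta} at $z=0$, add so that the $e^{\pi^2/\log q}$ terms cancel, substitute $q\mapsto q^{1/2}$, and match against the asymptotic $\tilde E_{2k}(q)\simeq(2\pi i/\log q)^{2k}$ from Theorem~\ref{TheoremEisenstein}.
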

\bibliographystyle{amsplain}

\end{document}